\newtheorem{theorem}{Theorem}[section]
\newtheorem{proposition}[theorem]{Proposition}
\newtheorem{corollary}[theorem]{Corollary}
\theoremstyle{definition}
\newtheorem{definition}[theorem]{Definition}
\newtheorem{remark}[theorem]{Remark}
\newtheorem{example}[theorem]{Example}
\newcommand{\innerproduct}[2]{\left\langle #1 , #2\right\rangle}
\newcommand{\romd}{\,\mathrm{d}}
\newcommand{\norm}[1]{\left\|#1\right\|}
\newcommand{\e}{e_F} %Used for the Jones Projection
\begin{document}
\date{2020-4-13}
\title[Relative discrete spectrum]{Relative discrete spectrum of 
W*-dynamical systems}

\author[R. Duvenhage]{Rocco Duvenhage}
\address{Department of Physics\\ University of Pretoria\\
Pretoria 0002, South Africa}
\email{rocco.duvenhage@up.ac.za}

\author[M. King]{Malcolm King}
\address{Department of Mathematics and Applied Mathematics\\ University of Pretoria\\
Pretoria 0002, South Africa}
\email{malcolmbruceking@gmail.com}

\begin{abstract}%LaTeX complaining about overfilled \hbox; 
A definition of relative discrete spectrum of noncommutative W*-dynamical systems is given
in terms of the basic construction of von Neumann algebras, motivated from three perspectives:
Firstly, as a complementary concept to relative weak mixing of W*-dynamical systems. Secondly,
by comparison with the classical (i.e. commutative) case. And, thirdly, by noncommutative examples.

\end{abstract}
\subjclass[2020]{Primary 46L55}

 \keywords{W*-dynamical systems; relative discrete spectrum; relative weak 
mixing; relatively 
independent joinings.}
\maketitle

\section{Introduction}
\label{afdInl}
In his study of ergodic actions of locally compact groups, Zimmer
\cite{Z1,Z2}
introduced relative discrete spectrum and proved what was to become known 
as the Furstenberg-Zimmer Structure Theorem. 
Proving the same structure theorem independently, Furstenberg \cite{F77} gave an 
ergodic theoretic proof of Szemeredi's Theorem.

In the noncommutative setting of W*-dynamical systems, Austin, Eisner and Tao \cite{AET}
proved a partial analogue of the Furstenberg-Zimmer Structure Theorem, providing conditions under
which a certain case of relative weak mixing holds.
In their approach, which builds on the work by Popa \cite{P}, the basic construction of von Neumann 
algebras is an essential tool, although they do not define relative weak mixing in terms
of the basic construction, and do not define relative discrete spectrum at all. Their use of the basic 
construction forms the basis for our approach to relative discrete spectrum, where we employ the basic 
construction for the von Neumann algebra of a W*-dynamical system and the subalgebra relative to which we 
want to define discrete spectrum of the W*-dynamical system. Of particular importance is 
\cite{AET}'s characterization of systems which are not relatively weakly mixing in 
terms of the existence of a non-trivial submodule, invariant under the dynamics and 
finite with respect to the trace on the basic construction. In the noncommutative case these kinds 
of submodules play an analogous role to the finite rank submodules  
which appear in the classical case.

The paper has two main parts. The first, consisting of Sections 2 and 3, treats our noncommutative 
definition of relative discrete 
spectrum. The definition is given in terms of the basic construction, and is motivated by the need 
to make relative discrete spectrum
complementary to relative weak mixing as in the classical case. Some tools and ideas provided by the
theory of joinings of W*-dynamical systems are used in the process. Our 
definition is then shown to not only be 
a noncommutative 
generalization of 
classical relative discrete spectrum, but also to generalize the noncommutative 
version of (absolute) discrete spectrum.

In the second part, consisting of Sections \ref{afdEU} and 
\ref{sec_finite_ext}, we discuss two noncommutative 
examples of relative discrete spectrum.
The first example (Section \ref{afdEU}) is a skew product of a commutative system with a 
noncommutative one. The second (Section \ref{sec_finite_ext}) is a purely 
noncommutative example on the von Neumann tensor product of two 
noncommutative systems, where the second 
system is finite dimensional.

We end the paper with a brief discussion of some open problems (Section \ref{sec_questions}).

Throughout this paper we will be working only with traces on von Neumann algebras, not general states or 
weights.
Note that we use the convention where inner products are linear in the right
and conjugate linear in the left.

\section{Relatively Independent Joinings and Relative Weak Mixing}
As the first step towards the concept of relative discrete spectrum, we study how relatively 
independent joinings (see \cite{D3, BCM}) can be expressed in terms of the basic construction.
Combining this with theory from \cite{DK} regarding relative weak mixing, places us in a position to
proceed to relative discrete spectrum in the next section.

In the remainder of this paper W*-dynamical systems are referred to 
as ``systems'' and we define them as follows:

\begin{definition}
\label{stelsel}A \emph{system} $\mathbf{A }=\left(  A ,\mu,\alpha\right)  $
consists of a faithful normal trace $\mu$ on a (necessarily finite) von
Neumann algebra $A $, and a $\ast$-automorphism $\alpha$ of $A $, such that
$\mu\circ\alpha=\mu$.
\end{definition}

In the sequel, for
$\mathbf{A }$ we assume without loss that $A $ is a von Neumann algebra on the
Hilbert space $H  $, with $\mu$ given by a cyclic and separating vector
$\Omega\in H  $, i.e.
\[
\mu(a)=\innerproduct{\Omega}{a\Omega}
\]
for all $a\in A $.

The dynamics $\alpha$ of a system $\mathbf{A }$ can be represented by a unitary
operator $U$ on $H  $ defined by extending
\[
Ua\Omega:=\alpha(a)\Omega.
\]
It satisfies
\[
Ua U^{\ast}=\alpha(a)
\]
for all $a\in A $.

Along with $\mathbf{A}$ above, we also use the notation
\[\mathbf{B}=(B, \nu, \beta)\qquad\text{and }\qquad \mathbf{F}=(F,\lambda,\varphi)\]
to denote systems.

\begin{definition}
We call $\mathbf{F}$ a \emph{subsystem} of $\mathbf{A }$ if $F$ is a von
Neumann subalgebra of $A $ (containing the unit of $A $) such that $\mu
|_{F}=\lambda$ and $\alpha|_{F}=\varphi$.
\end{definition}

Throughout the rest of the paper, $\mathbf{F}$ will be a subsystem of
$\mathbf{A }$. Set
\[
H_{F}:=\overline{F\Omega}.
\]

Next we review elements of the basic construction and relatively independent joinings.
%\begin{definition}\label{def_basic_constr}
Let $\e$ denote the projection of $H$ onto $H_F$.
    We consider the basic construction, $\innerproduct{A }{\e}$, the 
smallest von
    Neumann algebra (in $\mathcal{B}(H))$ containing $A $ and $\e.$ 
    See  \cite{Sk}, \cite{C} and \cite{J}.
    
%\end{definition}

Since $\mu$ is a trace,
we obtain from it a faithful semifinite normal tracial weight 
$\bar{\mu}:\innerproduct{A }{\e}^{+}\rightarrow\lbrack0,\infty]$. 
It is also defined and tracial on the strongly dense $\ast$-subalgebra 
$A  \e A :=\operatorname{span}\{a\e b:a,b\in A \}$ of 
$\innerproduct{A }{\e}$ via the equation
\[
\bar{\mu}(a\e b)=\mu(ab).
\]
For more on the basic construction and the trace $\bar{\mu}$, see
\cite[Chapter 4]{SS}.

\iffalse
\textbf{Here I include some background of the semicyclic representation , copied from \cite{DK},  
which 
we use in Theorem \ref{thm_no_RWM} below}
\fi
We can extend the dynamics of $\alpha$ to $\innerproduct{A }{\e}$ 
by
\[
\bar{\alpha}(a)=U a U^{\ast}
\]
for $a\in\innerproduct{A }{\e}$. 
Then from \cite[Section 3]{DK},
\[\bar\mu\circ \bar\alpha=\bar\mu.\]

Furthermore, we have a unitary operator
\[\bar{U}:\bar {H}\rightarrow\bar{H}\] 
representing $\bar{\alpha}$ on the Hilbert space
$\bar{H}$ obtained from the GNS construction for $(\innerproduct{A }{\e},\bar{\mu})$. 
 Denoting the
quotient map of this construction as 
\begin{equation}\label{eq_quotient_map_semicyclic}
\gamma_{\bar\mu}:\mathcal{N}_{\bar\mu}\rightarrow\bar{H},
\end{equation}
where 
\begin{equation}\label{eq_quotient_ideal_semicyclic}
\mathcal{N}_{\bar\mu}:=\{a\in\innerproduct{A }{\e}:\bar{\mu}(a^{\ast}a)<\infty\},
\end{equation}
we define $\bar{U}:\bar{H}\rightarrow\bar{H}$ via
\[
\bar{U}\gamma_{\bar\mu}(a)=\gamma_{\bar\mu}(\alpha(a)).
\]

We now turn to the relatively independent joining 
and its relation to the basic 
construction.
The modular conjugation associated to the trace $\mu$, will be denoted by $J$.
We let
\[%\label{eq_j}
j:\mathcal{B}(H  )\rightarrow \mathcal{B} (H  ):a\mapsto Ja^{\ast}J,
\]
where $\mathcal{B}(H )$ is the von Neumann algebra of all bounded linear operators on $H$.
Carry the trace and dynamics of the system $\mathbf{A }$ over
to $A ^{\prime}$ in a natural way using $j$, by defining a trace $\mu^{\prime}$
and $\ast$-automorphism $\alpha^{\prime}$ on $A ^{\prime}$ by
\[
\mu^{\prime}(b):=\mu\circ j(b)=\innerproduct{\Omega}{b\Omega}
\]
and
\[
\alpha^{\prime}(b):=j\circ\alpha\circ j(b)=UbU^{\ast}%
\]
for all $b\in A ^{\prime}$ (where we made use of $UJ=JU$). This defines the system
\[
\mathbf{A }^{\prime}:=(A ^{\prime},\mu^{\prime},\alpha^{\prime}).
\]

\begin{comment}
Since $\mathbf{F}$ is a modular subsystem of $\mathbf{A }$, we obtain a modular
subsystem $\mathbf{\tilde{F}}=\left(  \tilde{F},\tilde{\lambda},\tilde
{\varphi}\right)  $ of $\mathbf{A }^{\prime}$ as follows:
\end{comment}
Set
\[
\tilde{F}:=j(F),
%\subsetA ^{\prime}
\]
%(note that by the symbol $\subset$ we mean inclusion, with equality allowed),
%and let
\[
\tilde{\lambda}:=\mu^{\prime}|_{\tilde{F}},%
\]
and
\[
\tilde{\varphi}:=\alpha^{\prime}|_{\tilde{F}}.
\]

Let 
\[
D:A \rightarrow F
\]
be the unique conditional expectation
such that $\lambda\circ D=\mu$. Then
\[
{D^{\prime}}:=j\circ D\circ j:A ^{\prime}\rightarrow\tilde{F}%
\]
is the unique conditional expectation such that $\tilde{\lambda}%
\circ{D^{\prime}}=\mu^{\prime}$.
For later use we note that, since $j(f)\Omega=Jf^*\Omega=f\Omega$ for all 
$f\in F$, we have
\begin{equation}\label{eq_Dprime}
D'(b)\Omega=D(j(b))\Omega 
\end{equation}
for all $b\in A'$.
\begin{comment}%Marked for deletion
Let $P$ be the projection of $H$ onto
\[
H_{F}:=\overline{F\Omega}=\overline{\tilde{F}\Omega},
\]
where the last equality follows from $JH_{F}=H_{F}$. Then
\[
D(a)\Omega=Pa\Omega
\]
for all $a\in A $. This follows from the general construction of such
conditional expectations; see for example \cite[Section 10.2]{S}. Similarly,
\[
{D^{\prime}}(b)\Omega=Pb\Omega
\]
for all $b\in A ^{\prime}$. Also note that
\[
D\circ\alpha=\alpha\circ D=\varphi\circ D,
\]
and analogously for ${D^{\prime}}$, since
\[
PU=UP,
\]
as is easily verified from $\alpha(F)=F$.
\end{comment}
Define the unital $\ast$-homomorphism
\[
\delta:F\odot\tilde{F}\rightarrow B(H),
\]
on the algebraic tensor product $F\odot\tilde{F}$ as the linear extension of $F\times\tilde{F}\rightarrow 
B(H):(a,b)\mapsto
ab$. Define the \emph{diagonal} state
\[
\Delta_{\lambda}:F\odot\tilde{F}\rightarrow\mathbb{C}%
\]
of $\lambda$ by
\[
\Delta_{\lambda}(c):=\innerproduct{\Omega}{\delta(c)\Omega}
\]
for all $c\in F\odot\tilde{F}$. The \emph{relatively independent joining of }$\mathbf{A }$\emph{\ and }
$\mathbf{A }^{\prime}$\emph{\ over }$\mathbf{F}$ is the state $\mu\odot
_{\lambda}\mu^{\prime}$ on $A \odot A ^{\prime}$ given by
\begin{equation}
\mu\odot_{\lambda}\mu^{\prime}:=\Delta_{\lambda}\circ D\odot{D^{\prime}}. \label{mumu}%
\end{equation}
\begin{comment}%Marked for deletion
where
\[
E:=D\odot{D^{\prime}}.
\]
Note that 
\end{comment}
Subsequently we denote this joining
by
\[
\omega:=\mu\odot_{\lambda}\mu^{\prime}
\]
and also write
\[
\mathbf{A }\odot_{\mathbf{F}}\mathbf{A }^{\prime} :=(A \odot A 
^{\prime}, \omega,\alpha\odot\alpha^{\prime}).
\]
\begin{comment}%Marked for deletion
and call $\mathbf{A }\odot_{\mathbf{F}}\mathbf{A }^{\prime}$ the \emph{relative
product system} (of $\mathbf{A }$ and $\mathbf{A }^{\prime}$ over $\mathbf{F}$).
It is an example of a $\ast$\emph{-dynamical system}, namely it consists of a
state $\omega=\mu\odot_{\lambda}\mu^{\prime}$ on a unital $\ast$-algebra
$A \odot A ^{\prime}$, and a $\ast$-automorphism $\alpha\odot\alpha^{\prime}$ of
$A \odot A ^{\prime}$ such that $\omega\circ(\alpha\odot\alpha^{\prime})=\omega
$. However, this is typically not a (W*-dynamical) system as given by
Definition \ref{stelsel}.
\end{comment}
The cyclic representation of $(A \odot A ^{\prime}, \omega)$ obtained by the
GNS construction will be denoted by $(H_{\omega},\pi_{\omega},\Omega_{\omega
})$. 
\begin{comment}%Marked for deletion
Since $\omega$ can be extended to a state on the maximal C*-algebraic
tensor product $A \otimes_{m}A ^{\prime}$ (see for example \cite[Proposition
4.1]{D2}), we know that $\pi_{\omega}$ is a $\ast$-homomorphism from $A \odot
A ^{\prime}$ into the bounded operators $B(H_{\omega})$.
\end{comment}
Let
\[
\gamma_{\omega}:A \odot A ^{\prime}\rightarrow H_{\omega}:t\mapsto\pi_{\omega
}(t)\Omega_{\omega}.
\]
By $W$ we denote the unitary representation of
\[
\tau:=\alpha\odot\alpha^{\prime}%
\]
on $H_{\omega}$ defined as the extension of
\[
W\gamma_{\omega}(t):=\gamma_{\omega}(\tau(t))
\]
for all $t\in A \odot A ^{\prime}$.

We also set  
\begin{equation}\label{eq_Hlambda}
H_{\lambda}:=\overline{\gamma_{\omega}(F\otimes1)}.
\end{equation}

Next we turn our attention to expressing the GNS representation of $\omega$ in terms of $\bar{H}$, 
which is convenient for our subsequent work. The key point is to construct a natural unitary equivalence 
$R:H_\omega\rightarrow \bar{H}$ between $W$ and $\bar{U}$. 
In the classical case, such a result appears in 
\cite[pp. 63--64]{petersonErgodicNotes}.

\begin{proposition}\label{prop_R}
We have a uniquely determined well-defined unitary operator
\[R:H_\omega \rightarrow \bar{H}\]
satisfying 
$R\gamma_\omega(a\otimes j(b))=\gamma_{\bar\mu}(a\e b)$ for all $a,b\in A.$

Furthermore, 
\[\bar{U}=RWR^*.\]
\end{proposition}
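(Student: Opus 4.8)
The plan is to construct $R$ first on the dense subspace $\gamma_\omega(A\odot A')$ of $H_\omega$ by the prescribed formula, to verify that this assignment preserves inner products (which simultaneously yields well-definedness and isometry), to extend it by continuity to a unitary, and finally to check the intertwining relation on the same dense subspace. The first observation I would record is that $j(A)=A'$, so every elementary tensor in $A\odot A'$ is of the form $a\otimes j(b)$ with $a,b\in A$; hence $\{\gamma_\omega(a\otimes j(b)):a,b\in A\}$ spans a dense subspace of $H_\omega$. Dually, each $a\e b$ lies in $\mathcal{N}_{\bar\mu}$ (indeed $\bar\mu((a\e b)^*(a\e b))=\mu(b^*D(a^*a)b)<\infty$), and since $A\e A$ is a dense $*$-subalgebra of $\innerproduct{A}{\e}$ on which $\bar\mu$ is defined, $\{\gamma_{\bar\mu}(a\e b):a,b\in A\}$ spans a dense subspace of $\bar{H}$.

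The heart of the argument is the identity
\[
\innerproduct{\gamma_\omega(a_1\otimes j(b_1))}{\gamma_\omega(a_2\otimes j(b_2))}=\innerproduct{\gamma_{\bar\mu}(a_1\e b_1)}{\gamma_{\bar\mu}(a_2\e b_2)}
\]
for all $a_1,a_2,b_1,b_2\in A$. For the right-hand side I would use $\innerproduct{\gamma_{\bar\mu}(x)}{\gamma_{\bar\mu}(y)}=\bar\mu(x^*y)$ together with the Jones relation $\e x\e=D(x)\e$ (for $x\in A$) and $\bar\mu(x\e y)=\mu(xy)$ to reduce it to $\mu(b_1^*D(a_1^*a_2)b_2)$. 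For the left-hand side I would use $\innerproduct{\gamma_\omega(s)}{\gamma_\omega(t)}=\omega(s^*t)$, the definition $\omega=\Delta_\lambda\circ(D\odot D')$, the formula $\Delta_\lambda(c)=\innerproduct{\Omega}{\delta(c)\Omega}$, and --- crucially --- \eqref{eq_Dprime} together with $j(j(b_1)^*j(b_2))=b_2b_1^*$ (here $j$ is an anti-homomorphism with $j(a)^*=j(a^*)$ and $j\circ j=\id$), to reduce it to $\mu(D(a_1^*a_2)D(b_2b_1^*))$. A short computation using traciality and the module property of $D$, namely $\mu(b_1^*pb_2)=\mu(pb_2b_1^*)=\lambda(pD(b_2b_1^*))$ with $p:=D(a_1^*a_2)\in F$, then shows the two scalars coincide. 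This establishes that the assignment $\sum_i\gamma_\omega(a_i\otimes j(b_i))\mapsto\sum_i\gamma_{\bar\mu}(a_i\e b_i)$ is well-defined and isometric; being an isometry with dense range it extends to a unitary $R:H_\omega\to\bar{H}$, and it is unique since it is prescribed on a dense set.

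It remains to prove $\bar{U}=RWR^*$, for which it suffices to verify $RW=\bar{U}R$ on the dense subspace. On a vector $\gamma_\omega(a\otimes j(b))$, applying $W$ and then $R$ gives $\gamma_{\bar\mu}(\alpha(a)\e\alpha(b))$, where I use $\tau(a\otimes j(b))=\alpha(a)\otimes\alpha'(j(b))$ and $\alpha'(j(b))=j(\alpha(b))$ (from $\alpha'=j\circ\alpha\circ j$ and $j\circ j=\id$); applying $R$ and then $\bar{U}$ gives $\gamma_{\bar\mu}(\bar\alpha(a\e b))$. These coincide once I note that $\bar\alpha(a\e b)=(UaU^*)(U\e U^*)(UbU^*)=\alpha(a)\e\alpha(b)$, which rests on $U\e U^*=\e$; this last equality follows from $UH_F=H_F$, itself a consequence of $\alpha(F)=F$. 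The step I expect to be the main obstacle is the inner-product identity above: the bookkeeping with $j$ and the correct use of \eqref{eq_Dprime} to transport the $A'$-factor back to a computation inside $A$ is where the real content lies, after which unitarity, uniqueness, and the intertwining are all routine.
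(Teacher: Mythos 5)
Your proposal is correct and follows essentially the same route as the paper: both construct $R$ on the span of $\gamma_\omega(a\otimes j(b))$, prove the same inner-product identity using the Jones relation $\e x\e=D(x)\e$, traciality of $\bar\mu$, $\bar\mu(a\e b)=\mu(ab)$ and equation \eqref{eq_Dprime}, then extend by density and verify the intertwining via $U\e U^*=\e$. The only point where the paper is more careful is the density of $\gamma_{\bar\mu}(A\e A)$ in $\bar{H}$, which it obtains by citing \cite[Lemma 4.3.10]{SS} rather than inferring it from the strong density of $A\e A$ in $\innerproduct{A}{\e}$ alone.
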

\begin{proof}
Since $j$ is linear, we may define $R_0:A \odot A'\rightarrow \innerproduct{A }{\e}$ via the prescription
\[R_0(a\otimes b):=a\e j(b)\]
for $a\in A $ and $b\in A'$.
From the universal property of $A \odot A ',$ $R_0$ is well-defined and 
linear.
Note that $R_0(A \otimes A ')\subset \mathcal{N}_{\bar\mu}$ with 
$\mathcal{N}_{\bar\mu}=\{x\in\innerproduct{A }{\e}:\bar{\mu}(x^{\ast}x)<\infty\}$ 
as in \eqref{eq_quotient_ideal_semicyclic}.
Hence, we can consider 
\[R:\gamma_{\bar\mu}(A\odot A')\rightarrow \bar{H}:\gamma_\omega(t)\mapsto 
\gamma_{\bar\mu}(R_0(t)).\]

 We need to show that $R$ is well-defined and uniquely extends to a unitary operator $H_\omega\rightarrow 
\bar{H}$. For clarity, below, we distinguish the inner products of $H_\omega$ and
$\bar{H}$ by subscripts $\omega$ and $\bar\mu$. Note that for $a,c\in A$ and $b,d\in A'$,
 \begin{align}
     \innerproduct{\gamma_{\bar\mu}(R_0(a\otimes b))}{\gamma_{\bar\mu}(R_0(c\otimes 
d))}_{\bar\mu}
     =&\innerproduct{\gamma_{\bar\mu}(a\e j(b))}{\gamma_{\bar\mu}(c\e j(d))}_{\bar\mu}\nonumber\\
	=&\bar\mu(j(b^{\ast})\e a^{\ast} c\e j(d))\nonumber\\
	=&\bar\mu(\e a^{\ast}c\e j(d)j(b^{\ast})\e)\nonumber\\
    =&\bar\mu(D(a^{\ast}c)\e D(j(b^{\ast}d))\nonumber\\
    =&\mu(D(a^{\ast}c)D(j(b^{\ast}d)))\nonumber\\
	=&\innerproduct{\Omega}{D(a^{\ast}c)D'(b^{\ast}d)\Omega}\nonumber\\
	=&\innerproduct{\Omega}{\delta\circ(D\odot D')((a^{\ast}c)\otimes (b^{\ast}d))\Omega}\nonumber\\
	=&\omega((a^{\ast}c)\otimes (b^{\ast}d))=\omega((a\otimes b)^{\ast}(c\otimes d))\nonumber\\
	=&\innerproduct{\gamma_\omega(a\otimes b)}{\gamma_\omega(c\otimes d)}_\omega,\nonumber
 \end{align}
where we have used \eqref{eq_Dprime}.
 So it follows that for all $s,t\in A \odot_F A ',$
 \begin{equation}\label{eq_R_0}
\innerproduct{\gamma_{\bar\mu}(R_0(s))}{\gamma_{\bar\mu}(R_0(t))}_{\bar\mu}=\innerproduct{
\gamma_\omega(s) } {
\gamma_\omega(t) } _\omega.
 \end{equation}
 Thus, $R$ is well-defined (as $\gamma_\omega(t)=0$ implies $\gamma_{\bar\mu}(R_0(t))=0$) and can be extended 
to an isometric linear operator, still denoted by $R,$ from
 $H_\omega$ to $\bar{H}.$ 
From \cite[Lemma 4.3.10]{SS}, $\gamma_{\bar\mu}(A \e A )$ is 
dense in 
$\bar{H}.$
 It follows that $R\gamma_\omega(A \odot A ')=\gamma_{\bar\mu}(R_0(A \odot 
A '))=\gamma_{\bar\mu}(A \e A )$ is
 dense in $\bar{H}.$ Hence, $RH_\omega=\bar{H}$ and therefore $R$ is a unitary operator.

 For $a,b\in A ,$
\begin{align}
	RWR^{\ast}(\gamma_{\bar\mu}(a\e b))=&RW\gamma_\omega(a\otimes 
j(b))=R\gamma_\omega(\alpha(a)\otimes 
j(\alpha(b)))\nonumber\\
	=&\gamma_{\bar\mu}(\alpha(a)\e\alpha(b))=\gamma_{\bar\mu}(\bar\alpha(a\e b))\nonumber\\
	=&\bar{U}(\gamma_{\bar\mu}(a\e b)),\nonumber
\end{align}
which implies that
%\begin{equation}\label{eq_barU_RWR}
$	\bar{U}=RWR^{\ast}$.
%\end{equation}
\end{proof}

Note that we can express the relatively independent joining in terms of $\bar\mu$ using $R$:
For all $a\in A$ and $b\in A'$, 
\begin{align}\omega(a\otimes b)=&\innerproduct{R\gamma_\omega(1)}{R\gamma_\omega(a\otimes 
b)}_{\bar\mu}=\innerproduct{\gamma_{\bar\mu}(\e)}{\gamma_{\bar\mu}(a\e j(b))}_{\bar\mu}
\nonumber\\
=&\bar\mu(\e a\e 
j(b))=\bar\mu(D(a)\e D(j(b)).\nonumber
\end{align}

If $H^W_\omega$ denotes the vector space of all fixed points of $W,$ then
\[\bar{H}^{\bar{U}}:=RH^W_\omega,\]
must be the fixed points of $\bar{U}$.
We also have a copy of $H_\lambda$ in $\bar{H}$:
\begin{equation}\label{eq_eF}
\begin{aligned}
    \bar{H}_\lambda:=&RH_\lambda
    =R\overline{\gamma_\omega(1\otimes \tilde{F})}&&\text{from }\eqref{eq_Hlambda}\\
=&\overline{R\gamma_\omega(1\otimes \tilde{F})}\\
=&\overline{\gamma_{\bar\mu} [R_0(1\otimes \tilde{F})]}\\
=&\overline{\gamma_{\bar\mu}(\e F)}.
\end{aligned}
\end{equation}

%\textbf{Was in this section. Then moved to previous section. Now moved back.}
Having obtained our unitary equivalence $R$ in Proposition \ref{prop_R}, we can rephrase relative ergodicity
(\cite[Definition 4.1]{DK})
from a ``basic construction'' point of view:
\begin{definition}\label{def_mod_rel_ergodicity}
 We say that $\mathbf{A \odot_F A '}$ is ergodic relative to a subsystem
 $\mathbf{F}$ of $\mathbf{A}$, if $\bar{H}^{\bar{U}}\subset \bar{H}_\lambda.$
\end{definition}

We recall the following definition:
\begin{definition}(\cite[Definition 3.7]{AET})
	\label{RWMdefAET}We call a system $\mathbf{A}$ \emph{weakly mixing relative to}
	the subsystem $\mathbf{F}$ if
	\begin{equation}
	\lim_{N\rightarrow\infty}\frac{1}{N}\sum_{n=1}^{N}\lambda\left(
	|{D(a^*\alpha^{n}(a))|}^{2}\right)  =0 \label{RSV}%
	\end{equation}
	for all $a\in A$ with $D(a)=0$.
\end{definition}
Since $\mu$ is tracial, Definition \ref{RWMdefAET} coincides with \cite[Definition 3.1]{DK} 
because of \cite[Proposition 3.8]{DK}. Thus the formulation of \cite[Theorem 4.2]{DK} does not change: 

\begin{theorem}\label{thm_orig_RWM_relative_ergodicity}
    The system $\mathbf{A}$ is weakly mixing relative to
	$\mathbf{F}$ if and only if 
$\mathbf{A \odot_F A '}$ is ergodic relative to $\mathbf{F}$.
    
\end{theorem}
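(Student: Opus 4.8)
The plan is to deduce the statement from \cite[Theorem 4.2]{DK} by checking that the two conditions appearing there coincide, respectively, with the hypothesis and the conclusion of the present theorem. On the weak mixing side this has essentially been done already: since $\mu$ is tracial, Definition \ref{RWMdefAET} agrees with \cite[Definition 3.1]{DK} via \cite[Proposition 3.8]{DK}. So the only genuine task is to reconcile the two formulations of relative ergodicity.

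First I would recall that \cite[Theorem 4.2]{DK} asserts that $\mathbf{A}$ is weakly mixing relative to $\mathbf{F}$ (in the sense of \cite[Definition 3.1]{DK}) precisely when the fixed point space $H_\omega^W$ of $W$ satisfies $H_\omega^W\subset H_\lambda$, this inclusion being \cite[Definition 4.1]{DK} of relative ergodicity of $\mathbf{A}\odot_{\mathbf{F}}\mathbf{A}'$ over $\mathbf{F}$. It then remains only to show that the inclusion $H_\omega^W\subset H_\lambda$ is equivalent to the inclusion $\bar{H}^{\bar{U}}\subset\bar{H}_\lambda$ of Definition \ref{def_mod_rel_ergodicity}.

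This last equivalence I would obtain directly from Proposition \ref{prop_R}. Because $R:H_\omega\to\bar{H}$ is a unitary intertwining $W$ and $\bar{U}$, namely $\bar{U}=RWR^{\ast}$, it carries the fixed point space of $W$ bijectively onto that of $\bar{U}$; indeed $\bar{H}^{\bar{U}}=RH_\omega^W$ by construction, while $\bar{H}_\lambda=RH_\lambda$ by \eqref{eq_eF}. Since a bijective map preserves inclusions of subspaces,
\[
H_\omega^W\subset H_\lambda \iff RH_\omega^W\subset RH_\lambda \iff \bar{H}^{\bar{U}}\subset\bar{H}_\lambda,
\]
and concatenating these equivalences with the two identifications above yields the theorem.

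I do not expect a real obstacle here, as the analytic substance lives in \cite[Theorem 4.2]{DK} and in the construction of the intertwining unitary $R$ in Proposition \ref{prop_R}; the present statement merely transports that result into basic-construction language. The only point requiring care is to confirm that Definition \ref{def_mod_rel_ergodicity} was set up precisely so that $R$ sends the pair $(H_\omega^W, H_\lambda)$ of \cite[Definition 4.1]{DK} onto $(\bar{H}^{\bar{U}}, \bar{H}_\lambda)$ — which is exactly how these latter two subspaces were defined.
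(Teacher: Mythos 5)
Your proposal is correct and follows essentially the same route as the paper: the authors likewise obtain the theorem by observing that, since $\mu$ is tracial, the weak mixing definitions coincide via \cite[Proposition 3.8]{DK}, and that Definition \ref{def_mod_rel_ergodicity} is by construction just \cite[Definition 4.1]{DK} transported through the unitary $R$ of Proposition \ref{prop_R} (indeed $\bar{H}^{\bar{U}}=RH_\omega^W$ and $\bar{H}_\lambda=RH_\lambda$ are how these spaces are defined), so \cite[Theorem 4.2]{DK} applies verbatim. No gaps.
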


In the next section this theorem will allow us to formulate relative discrete spectrum in terms of the 
basic construction as a complementary concept to relative weak mixing.
\section{Relative Discrete Spectrum}
\label{afdRDS}
 The inspiration for our noncommutative definition of relative discrete spectrum is the 
treatment 
in \cite{G} of the original work of Furstenberg and Zimmer (see \cite[p. 193]{G}).
The $U$-$\bar\mu$-modules (Definition \ref{def_U_alpha_modules}) play a role analogous
to that of the finite rank modules appearing in \cite[Definition 9.2]{G} and \cite[Definition 
9.10]{G}.
However, unlike \cite{G}, we do not use an analogue of generalized eigenfunctions. Instead we opt to directly 
use 
the 
$U$-$\bar\mu$-modules to define a subspace analogous to the vector space 
$\mathcal{E}(\mathbf{X}/\mathbf{Y})$ of all 
generalized eigenfunctions appearing
in \cite[Definition 9.10]{G}.

In order to motivate our definition of relative discrete spectrum, we are
going to make use of ideas from relative weak mixing, as developed in
\cite[Sections 3 and 4]{AET}
and \cite[Section 2]{P}, and subsequently studied
further in \cite{DK} in connection to relatively independent joinings. 

We begin by defining
\[
xa:=j(a)x
\]
for all $x\in H  $ and $a\in A $, making $H  $ a right-$A $-module. Of course, $H  $
is already a left-$A $-module by $A $'s usual action on $H  $, so $H  $ is in fact a
bimodule, but it is the right module structure that will be of particular
significance for us.

\begin{definition}
Given a closed subspace $V$ of $H,$ denote the projection of $H$ onto $V$ by $P_{V}$. We call $V$ a 
\emph{right-}$F$\emph{-submodule} (of
$H$) if $VF\subset V$, i.e. if $xa\in V$ for all $x\in V$ and for all $a\in F$. 
\end{definition}

\begin{proposition}\label{prop_right_modules_basic_constr}
 Let $V$  be a closed subspace of $H$. 
    Then $V$ is a right $F$-submodule if and only if $P_V\in 
\innerproduct{A }{\e}$.
\end{proposition}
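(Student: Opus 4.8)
The plan is to translate the right-module condition into a commutation condition with the projection $P_V$, and then invoke the standard identification of the basic construction as a commutant. First I would unpack the right action. Since $xa=j(a)x$ with $j(a)=Ja^{*}J$, and since $F$ is a von Neumann algebra (so $F=F^{*}$), we have $j(F)=JFJ=\tilde{F}$. Hence $VF\subseteq V$ holds exactly when $j(a)x\in V$ for all $x\in V$ and $a\in F$, i.e.\ precisely when $\tilde{F}V\subseteq V$. So ``$V$ is a right-$F$-submodule'' is the same as ``$V$ is invariant under the von Neumann algebra $\tilde{F}$''.

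Second, I would pass from invariance to commutation. Because $\tilde{F}$ is self-adjoint, invariance of $V$ forces invariance of $V^{\perp}$: for $x\in V^{\perp}$, $y\in V$ and $\tilde{f}\in\tilde{F}$ one has $\innerproduct{\tilde f x}{y}=\innerproduct{x}{\tilde f^{*}y}=0$, since $\tilde f^{*}\in\tilde F$ gives $\tilde f^{*}y\in V$. With both $V$ and $V^{\perp}$ invariant, $\tilde f$ is block-diagonal with respect to $P_V$, so $P_V$ commutes with every element of $\tilde{F}$. Conversely, if $P_V$ commutes with $\tilde{F}$ then $\tilde f V=\tilde f P_V H=P_V\tilde f H\subseteq V$. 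Thus $V$ is a right-$F$-submodule if and only if $P_{V}\in\tilde{F}'$.

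Third, and this is the only substantive point, I would identify $\tilde{F}'$ with the basic construction. This is exactly the standard characterization in the finite/tracial setting: $\innerproduct{A }{\e}=(JFJ)'=JF'J=\tilde{F}'$, where $\e$ is the Jones projection onto $H_{F}=\overline{F\Omega}$. This identity is recorded in \cite[Chapter 4]{SS} and is the structural heart of the basic construction; as a sanity check it gives $\innerproduct{A }{\e}=A$ when $F=A$ (then $\e=\id$ and $(JAJ)'=(A')'=A$), consistent with the setup. Chaining the three equivalences yields $P_V\in\innerproduct{A }{\e}$ iff $V$ is a right-$F$-submodule, as claimed.

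I expect no genuine obstacle beyond the third step, where the work is to correctly cite and apply the known commutant description of $\innerproduct{A }{\e}$ rather than reprove it; the subtlety in the first step is purely the bimodule bookkeeping, namely recognizing that the right action of $F$ is implemented through $j(F)=\tilde{F}\subseteq A'$, so that the relevant invariance is under $\tilde{F}$ and the relevant commutant is $\tilde{F}'$ rather than $F'$.
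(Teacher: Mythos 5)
Your proposal is correct and follows essentially the same route as the paper's proof: the paper likewise rewrites the right-$F$-module condition as $j(F)V\subset V$, notes this is equivalent to $P_V\in (JFJ)'$, and cites \cite[Lemma 4.2.3]{SS} for $(JFJ)'=\innerproduct{A}{\e}$. You merely spell out the standard ``invariance under a self-adjoint algebra iff the projection lies in its commutant'' step that the paper compresses into a single equivalence.
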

\begin{proof}
Simply note that, for all $a\in F,$
 \[
    j(F)V\subset V
    \Leftrightarrow P_V\in (JFJ)'=\innerproduct{A}{\e},
\]    
the last equality following from \cite[Lemma 4.2.3]{SS}. 
\end{proof}

We are interested in Hilbert subspaces $V$ of $H$ which are invariant under the group $\{U^n:n\in 
\mathbb{Z}\}$, therefore we say that $V$ is \emph{$U$-invariant} if 
\[UV=V,\]
rather than just assuming inclusion.
\begin{definition}\label{def_U_alpha_modules}
Suppose $V\subset H  \ominus H_{F}$ is a $U$-invariant right-$F$-submodule.
 Call $V$ a $U$\emph{-}$\bar{\mu}$\emph{-module} if in addition $V$ satisfies
\[\bar{\mu}(P_{V})<\infty.\]
\end{definition}
%\textbf{Skrif bl. 772.}
\begin{definition}\label{def_E(AF)}
By $\mathcal{E}_{A /F}$ denote the closed subspace of $H\ominus H_F$ spanned by all 
$U$-$\bar\mu$-modules.
\end{definition}

We now want to capture the idea that relative weak mixing and relative discrete spectrum exist as 
complementary concepts
(\cite[\S 12.4]{Tao} presents this point of view in the commutative case). It is based on the following 
result,
the one direction of which is
proven in \cite[Proposition 3.8]{AET}, although they also mention that
the other direction holds. We prove the latter using Theorem 
\ref{thm_orig_RWM_relative_ergodicity}. 

\begin{theorem}\label{thm_no_RWM}
The system $\mathbf{A }$ is weakly mixing relative to $\mathbf{F}$ if and only
if %there are no non-trivial $U$-$\bar{\mu}$-modules.
$\mathcal{E}_{A /F}=\{0\}.$
\end{theorem}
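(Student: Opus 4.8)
The plan is to reformulate both sides of the equivalence in terms of the fixed-point space $\bar{H}^{\bar U}$ living inside $\bar H$, using the unitary equivalence $R$ from Proposition \ref{prop_R} together with Theorem \ref{thm_orig_RWM_relative_ergodicity}. By Theorem \ref{thm_orig_RWM_relative_ergodicity}, $\mathbf{A}$ is weakly mixing relative to $\mathbf{F}$ precisely when $\mathbf{A}\odot_{\mathbf F}\mathbf{A}'$ is ergodic relative to $\mathbf{F}$, i.e. when $\bar H^{\bar U}\subset \bar H_\lambda$, where $\bar H_\lambda=\overline{\gamma_{\bar\mu}(\e F)}$ by \eqref{eq_eF}. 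So the whole theorem reduces to proving the equivalence
\[
\bar H^{\bar U}\subset \bar H_\lambda \quad\Longleftrightarrow\quad \mathcal{E}_{A/F}=\{0\}.
\]
I would carry this out by transporting the $U$-$\bar\mu$-modules of Definition \ref{def_U_alpha_modules} over to $\bar H$ via $R$ and relating them to the $\bar U$-invariant structure there.

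For the contrapositive of the forward direction, I would suppose $\mathcal{E}_{A/F}\ne\{0\}$ and produce a nonzero vector in $\bar H^{\bar U}\setminus \bar H_\lambda$. Take a nonzero $U$-$\bar\mu$-module $V\subset H\ominus H_F$. By Proposition \ref{prop_right_modules_basic_constr}, the projection $P_V$ lies in $\innerproduct{A}{\e}$, and the finiteness condition $\bar\mu(P_V)<\infty$ together with $UV=V$ says $P_V\in\mathcal N_{\bar\mu}$ is a nonzero $\bar\alpha$-fixed element (since $\bar\alpha(P_V)=UP_VU^\ast=P_{UV}=P_V$). Hence $\gamma_{\bar\mu}(P_V)$ is a nonzero vector fixed by $\bar U$, so it lies in $\bar H^{\bar U}$. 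The key computation is then to check that $\gamma_{\bar\mu}(P_V)\notin\bar H_\lambda$: because $V\perp H_F$ the projection $P_V$ is ``orthogonal'' to the range of $\e$ in the right-module sense, and one computes inner products $\innerproduct{\gamma_{\bar\mu}(P_V)}{\gamma_{\bar\mu}(\e f)}_{\bar\mu}=\bar\mu(\e f^\ast P_V)$, which should vanish or otherwise force $P_V$ to be supported off $H_F$, witnessing that $\bar H^{\bar U}\not\subset\bar H_\lambda$.

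For the reverse direction I would argue the other contrapositive: assuming $\bar H^{\bar U}\not\subset\bar H_\lambda$, I must manufacture a genuine $U$-$\bar\mu$-module, i.e. a nonzero $\bar U$-fixed vector must be converted back into a $U$-invariant, finite-trace right-$F$-submodule of $H\ominus H_F$. This is the step I expect to be the main obstacle: a fixed vector of $\bar U$ in $\bar H$ is, via $\gamma_{\bar\mu}$, represented by some element of $\innerproduct{A}{\e}$, but one has to extract from it an actual projection $P_V$ with $\bar\mu(P_V)<\infty$ and $UV=V$, and to arrange $V\subset H\ominus H_F$ by subtracting off the part in $H_F$. The natural strategy is to apply spectral theory to a self-adjoint $\bar\alpha$-fixed element of $\innerproduct{A}{\e}$ obtained from the fixed vector (the content of \cite[Proposition 3.8]{AET}, which supplies exactly this direction), and to read off a suitable spectral projection as $P_V$, with finiteness of $\bar\mu(P_V)$ coming from $\bar\mu$ being a faithful semifinite normal trace finite on $\e$. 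I would invoke the construction underlying \cite[Proposition 3.8]{AET} here rather than rebuild it, and then verify that the resulting $V=P_VH$, after projecting away $H_F$, is nonzero and meets all the conditions of Definition \ref{def_U_alpha_modules}, so that $\mathcal{E}_{A/F}\ne\{0\}$.
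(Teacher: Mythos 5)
Your proposal is correct and follows essentially the same route as the paper: the substantive direction is the identical construction $x=\gamma_{\bar\mu}(P_V)\in\bar{H}^{\bar{U}}$ with $x\perp\bar{H}_\lambda$ (since $P_V\e=0$), combined with Theorem \ref{thm_orig_RWM_relative_ergodicity}, and the other direction is delegated to \cite[Proposition 3.8]{AET} in both. The only difference is that your reverse direction takes an unnecessary detour: you need not convert a $\bar{U}$-fixed vector back into a module via spectral theory, because \cite[Proposition 3.8]{AET} already yields ``no non-trivial $U$-$\bar\mu$-modules implies \eqref{RSV}'' directly, without passing through $\bar{H}^{\bar{U}}$ at all.
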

\begin{proof}
    Note that the statement of the theorem can be rephrased as follows: 
 The system $\mathbf{A }$ is weakly mixing relative to $\mathbf{F}$ if and only
if there are no non-trivial $U$-$\bar{\mu}$-modules.  

That \eqref{RSV} holds if 
there are no non-trivial $U$-$\bar{\mu}$-modules,  
follows from
 \cite[Proposition 3.8]{AET}.
We prove the converse as follows:

Assume there is a non-trivial $U$-$\bar\mu$-module $V$.
 Hence, $P_V\in \mathcal{N}_{\bar\mu}$ and we can set
\[x:=\gamma_{\bar\mu}(P_V)\in \bar H.\]
As $U V=V,$ we have $\bar\alpha(P_V)=U P_V U^{\ast}=P_V.$
%\textbf{Skrif bl. 864}.
Hence, $x\in \bar{H}^{\bar{U}},$
 with $x\neq 0,$ since $P_V\neq 0$ 
and $\bar\mu$ is faithful.

Since $P_V\e=0,$ 
\[\innerproduct{x}{\gamma_{\bar\mu}(\e a)}_{\bar\mu}=\bar\mu(P_V^{\ast}\e a)=0,\]
for all $a\in F.$
Hence, from \eqref{eq_eF}, $x\perp \bar{H}_\lambda,$ so $x\notin \bar{H}_\lambda$ (since $x\neq 
0$) and thus
$\bar{H}^{\bar{U}}\not\subset \bar{H}_\lambda.$

In other words, $\mathbf{A \odot_F A '}$ is not ergodic relative to $\mathbf{F}$. By
Theorem \ref{thm_orig_RWM_relative_ergodicity} we are done.
\end{proof}

Motivated by this result, we now present the main definition of this paper:
%\textbf{Skrif bl. 774.}
\begin{definition}\label{defRDS}
  We say that the system $\mathbf{A }$ has \emph{discrete spectrum relative to} 
$\mathbf{F}$ if $\mathcal{E}_{A /F}=H  \ominus H_F.$ Alternative terminology for this is to say 
that $\mathbf{A }$ is an \emph{isometric extension} of $\mathbf{F}$.
\end{definition}
Thus relative weak mixing and relative discrete spectrum correspond to the two extremes of 
$\mathcal{E}_{A/F}$, and are, in this sense, complementary.

 In the remainder of this section we
show that the classical definition of relative discrete spectrum as well as
the absolute case of noncommutative discrete spectrum are special cases of
this definition, confirming that it is a sensible definition in a
noncommutative framework. 

The classical notion of relative discrete spectrum is  
defined as follows (see \cite[Definition 
9.10]{G}):
\begin{definition}\label{def_rds_classical}
Assume that $\mathbf{A }$ is a classical system, i.e. $A =L^{\infty}(\eta)$ for
a standard probability space $(Y,\Sigma,\eta)$. A $F$-submodule $V$ of
$H  =L^{2}(\eta)$ is said to be of \emph{finite rank}
if there are $x_{1},...,x_{n}\in
V$ such that 
\[V=\overline{\left\{\sum^n_{i=1} a_i x_i:a_{1},...,a_{n}\in F\right\}},\]
where
$a_{j}x_{j}$ is simply pointwise multiplication of functions. We call $x\in H  $
an $F$\emph{-eigenvector} of $U$ if $x$ belongs to some $U$-invariant finite
rank $F$-module (for simplicity, $x=0$ is allowed). If $H\ominus H_F$ is spanned by the $F$-eigenvectors of 
$U$, then we say
that $\mathbf{A }$ has \emph{relative discrete spectrum over $\mathbf{F}$ in the
classical sense.}
\end{definition}

\begin{remark}
	In \cite{G}, the condition that $H\ominus H_F$ is spanned by the $F$-eigenvectors of $U$, is expressed as 
$H$ being spanned by the $F$-eigenvectors of $U$. These two conditions are equivalent. 
	This is simply because $H_F$ is a finite rank $U$-invariant $F$-module. Hence all elements of $H_F$ are 
$F$-eigenvectors of $U$, so if $x\in H$ is an $F$-eigenvector of $U$, then so is $e_F x\in H_F$, and therefore 
$(1-e_F)x\in H\ominus H_F$ as well.
\end{remark}

Definition \ref{def_rds_classical} is indeed a special case of Definition \ref{defRDS} as is proved below in 
Proposition 
\ref{prop_RDSiffRDSclassical}. The proof uses direct integral theory, as 
it is used in \cite[Lemma 4.1]{AET}. This is why we assume that $(X,\mathcal{X},\eta)$ be 
standard, as it ensures 
that $L^2(\eta)$ is separable (\cite[Corollary 5.3]{nielsen1980}). 

\begin{proposition}\label{prop_RDSiffRDSclassical}
Assume that $\mathbf{A }$ is a classical system, 
i.e. $A =L^{\infty}(\eta)$ for
a standard probability space $(X,\mathcal{X},\eta)$ and $\alpha(f)=f\circ T$ for some fixed invertible map 
$T:X\rightarrow X$ satisfying $\eta(Z)=\eta(T^{-1}(Z))$ for all $Z\in \mathcal{X}.$ 
The system $\mathbf{A }$ 
has discrete
spectrum relative to $\mathbf{F}$ (in the sense of Definition \ref{defRDS})
if and only if it has relative discrete spectrum over $\mathbf{F}$ in the classical
sense. 
\end{proposition}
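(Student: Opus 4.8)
\emph{Proof proposal.} The plan is to realise both notions of relative discrete spectrum through the direct integral decomposition of $H=L^2(\eta)$ over the factor determined by $F$, and then to match the finite-trace condition $\bar\mu(P_V)<\infty$ with the classical finite-rank condition. First I would write the disintegration $\eta=\int_Y \eta_y\romd\eta_Y(y)$ of $\eta$ over the factor space $(Y,\eta_Y)$ corresponding to $F$ (this is exactly where standardness of $(X,\mathcal{X},\eta)$, hence separability of $L^2(\eta)$, is used, as in \cite[Lemma 4.1]{AET}), so that $H=\int_Y^\oplus H_y\romd\eta_Y(y)$ with $H_y=L^2(\eta_y)$, $F$ acting as diagonal multiplication, $H_F$ realised as the field of fibrewise constants, and $\e$ as the field of projections onto those constants. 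By Proposition \ref{prop_right_modules_basic_constr} a closed subspace $V$ is a right-$F$-submodule exactly when $P_V\in\innerproduct{A}{\e}=j(F)'$, and in the classical case $j(F)'$ is precisely the algebra of decomposable operators $\int_Y^\oplus T_y\romd\eta_Y(y)$. Thus right-$F$-submodules correspond bijectively to measurable fields $y\mapsto V_y$ of closed subspaces, with $P_V=\int^\oplus P_{V_y}$, and I would check that $\bar\mu$ is the integrated fibrewise (unnormalised) trace, so that
\[
\bar\mu(P_V)=\int_Y \dim V_y\romd\eta_Y(y).
\]
In particular $\bar\mu(P_V)<\infty$ forces $\dim V_y<\infty$ for $\eta_Y$-a.e.\ $y$. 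Since $\alpha(f)=f\circ T$ lifts to the base transformation together with natural fibrewise unitaries intertwining the fibres, $U$-invariance of a right-$F$-submodule forces the dimension function $y\mapsto\dim V_y$ to be $T$-invariant.

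For the implication ``classical $\Rightarrow$ Definition \ref{defRDS}'', I would note that any $U$-invariant finite-rank $F$-module has fibre dimension bounded by its number of generators, hence finite trace. By the Remark following Definition \ref{def_rds_classical} the spanning $F$-eigenvectors may be taken in $H\ominus H_F$, and for such an eigenvector $x$ lying in a $U$-invariant finite-rank module $V_0$, the space $W:=\overline{(1-\e)V_0}$ is a $U$-$\bar\mu$-module containing $x$ (using that $1-\e\in\innerproduct{A}{\e}$ commutes with both $U$ and the right-$F$-action, and does not increase fibre dimension). Hence every such eigenvector lies in $\mathcal{E}_{A/F}$; since they span $H\ominus H_F$ and $\mathcal{E}_{A/F}\subseteq H\ominus H_F$ by definition, we obtain $\mathcal{E}_{A/F}=H\ominus H_F$.

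For the converse it suffices to place each $U$-$\bar\mu$-module $V$ inside the closed span of the $F$-eigenvectors. Using $T$-invariance of the dimension, the sets $Y_k:=\{y:\dim V_y=k\}$ are measurable and $T$-invariant, and $V=\overline{\bigoplus_{k\ge 1}V^{(k)}}$, where $V^{(k)}$ is the restriction of $V$ over $Y_k$, obtained by applying the projection $1_{Y_k}\in F$. Each $V^{(k)}$ is a $U$-invariant right-$F$-submodule of $H\ominus H_F$ of constant fibre dimension $k$; a measurable choice of orthonormal bases of the fibres (valid by separability) yields $k$ sections in $H$ generating $V^{(k)}$ over $F$, so $V^{(k)}$ is finite rank. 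Thus every element of $V^{(k)}$ is an $F$-eigenvector in $H\ominus H_F$, giving $H\ominus H_F=\mathcal{E}_{A/F}\subseteq\overline{\operatorname{span}}\{F\text{-eigenvectors}\}\subseteq H\ominus H_F$, which is the classical condition.

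I expect the main obstacle to be the two measure-theoretic fibre statements: verifying that $\bar\mu$ integrates to the fibrewise trace, so that the finite-trace and average-rank conditions genuinely coincide, and the measurable selection showing that a constant-dimension invariant field is generated by finitely many $L^2$ sections. Both require care with the direct integral model and form the technical core, whereas the module-theoretic and invariance bookkeeping is routine.
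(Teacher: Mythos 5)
Your proposal is correct and follows essentially the same route as the paper: a direct integral decomposition of $H$ over the abelian algebra $F$ (standardness giving separability), the identification $\bar\mu(P_V)=\int_Y \dim V_y\romd\nu(y)$, and a partition of the base into sets of constant fibre dimension to convert the finite-trace condition into finite rank and back. The only differences are organizational: the paper partitions by the ambient fibre dimension $\dim H_p=n$ rather than by $\dim V_y=k$ (which lets it cite the measurable-section argument of \cite[Lemma 4.1]{AET} instead of selecting orthonormal bases of the $V_y$), and it reduces eigenvectors to $H\ominus H_F$ via the remark after Definition \ref{def_rds_classical} where you use the cut-down $\overline{(1-\e)V_0}$.
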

\begin{proof}
%\textbf{Skrif bl. 775.} 
Assume that $\mathbf{A }$ has discrete spectrum relative to $\mathbf{F}.$ The approach of the proof is to 
express any $U$-$\bar\mu$ module 
$V$ as 
the direct sum of finite rank modules, using ideas from the proof of \cite[Lemma 4.1]{AET}. 

Using \cite[Theorem 14.2.1]{KadRing2}, since $F$ is commutative, we have a unitary operator 
$\Phi:H  \rightarrow H_\oplus$ 
where $H_\oplus$ is a direct 
integral $H_\oplus=\int_Y^\oplus H_p \romd\nu(p)$ of Hilbert spaces $H_p$ indexed by some standard 
probability 
space $(Y,\mathcal{Y},\nu).$ Thus, in particular, any statement about a module $V$ in 
$H_\oplus$ has a corresponding statement about $\Phi^{-1}V$ in $H$. 

Define 
\[\phi:F\rightarrow \mathcal{B}(H_\oplus):a\mapsto \Phi a \Phi^{-1}.\]
The von Neumann algebra $F$ is then identified with the von Neumann algebra of all 
diagonalizable operators $\phi(F)=\{M_f: f\in L^\infty(\nu)\}$ where $M_f\in 
\mathcal{B}(H_\oplus)$ is the multiplication operator acting 
on $x\in H_\oplus$ via the equality 
$(M_fx)(p)=f(p)x(p)$ for 
almost all $p\in X$.
 Given any $U$-$\bar\mu$-module $V,$ then as in the proof of \cite[Lemma 4.1]{AET} we can write 
 \[\Phi V=\int^\oplus_Y V_p\romd\nu(p),\] 
 for a measurable field of Hilbert subspaces $V_p\subset H_p.$
 
We shall now express $\Phi V$ as a 
direct sum of $\phi(F)$-modules of finite rank. 
For 
each $n\in 
\mathbb{N}\cup \{\infty\}$ write
\[Y_n:=\{p\in Y: \mathrm{dim}\,(H_p)=n\}.\] 

Each $Y_n$ turns out to be measurable \cite[Remark 14.1.5]{KadRing2}. 
Consider the projections $M_{\chi_{Y_n}}$
and define
\[V_n:=\int_{Y_n} V_p  \romd\nu(p)=M_{\chi_{Y_n}}\Phi V,\]
where $\chi_{Y_n}$ denote the indicator functions. As in the proof of \cite[Lemma 4.1]{AET}, $\int_Y 
\mathrm{dim}(V_p)\romd\nu(p)<\infty$, so $\nu(Y_\infty)=0,$ hence $V_\infty=0$ 
and the collection $\{Y_n: n\in 
\mathbb{N}\}$ satisfies $\nu(\cup_{n\in \mathbb{N}}Y_n)=1.$ 
It follows that $\Phi V$ can be identified with $\oplus_{n\geq 1} V_n.$

It is now straightforward to verify that each $\Phi^{-1}V_n$ is a $U$-$\bar\mu$-module:
We have, for every $f\in F,$
\[f\Phi^{-1}V_n=f\phi^{-1}(M_{\chi_{Y_n}})(V)=\phi^{-1}(M_{\chi_{Y_n}})fV\subset 
\phi^{-1}(M_{\chi_{Y_n}})V=\Phi^{-1}V_n,\]
so that each $V_n$ is a right $\phi(F)$-module. 

In a similar way to the proof of \cite[Lemma 4.1]{AET}, $\alpha$ induces dynamics on $Y$ leaving each $Y_n$ 
invariant, which in turn means that each $V_n$ is $U$-invariant, since $\Phi U \Phi^{-1}$ is given by a 
measurable section of unitary operators $\Psi:Y\rightarrow \amalg_{p\in Y}\mathcal{U}(H_p)$ combined 
with $S$.

By construction, $\mathrm{dim}(V_p )\leq n$ 
whenever $p\in Y_n$ and it follows that $\Phi^{-1}V_n$ is of finite rank.

So $\Phi V$ consists solely of $\phi(F)$-eigenvectors  
and hence $V$ and therefore (because of Definitions \ref{defRDS} and \ref{def_E(AF)}) also $H  \ominus H_F$ 
are 
spanned by $F$-eigenvectors as required.

We now prove the converse. Assume that $\mathbf{A}$ has relative discrete spectrum over 
$\mathbf{F}$ in the 
classical sense.
 Then we simply have to show that the projection $P_V$ corresponding to a finite rank 
$F$-module $V\subset H\ominus H_F$
satisfies $\bar\mu(P_V)<\infty.$
 
Consider then any finite rank $F$-module $V:=\overline{\, \left\{\sum^{n}_{i=1} f_iv_i: f_i\in 
F\right\}}.$ 

We now give a description of $V_p$ for almost all $p .$
Put $w_i:=\Phi v_i$ for each $i=1,2,\ldots,n.$ 
Thus, 
\[\Phi V=\overline{\left\{\sum^n_{i=1} M_{g_i}w_i: g_i\in 
L^\infty(\nu)\right\}}.\] 

Hence all vectors of the form $M_gw$ for $g\in L^\infty(\nu)$ and $w\in \{w_i: i=1,2,\ldots,n\}$ 
form a 
dense spanning set
for $\Phi V$ and thus, from \cite[Lemma 14.1.3]{KadRing2}, for almost all $p,$
\begin{align}
  V_p
            &=\overline{\left\{\sum^n_{i=1} g_i(p)w_i(p): g_i\in 
                L^\infty(\nu)\right\}}\nonumber\\
            &=\mathrm{span}\{w_i(p): i=1,2,\ldots,n\}\nonumber.
\end{align}

 Similar to the proof of \cite[Lemma 4.1]{AET}, we thus have,
\[ \bar\mu(P_V)=\int_Y \mathrm{dim}(V_p) \romd\nu(p))\leq \int_Y n 
\romd\nu(p)=n<\infty.\]
\end{proof}

We consider another special case of Definition \ref{defRDS} when $F=\mathbb{C}1$ and 
$\lambda=\mu|_F.$

 We take note that in this case the basic construction is given by 
$\innerproduct{A }{\e}=JF'J=J\mathcal{B}(H)J=\mathcal{B}(H),$ using \cite[Lemma 4.2.3]{SS}. Thus, since the 
trace on $\mathcal{B}(H)$ is unique up to nonzero scalar multiples, we may take $\bar\mu$ to be the 
canonical trace $\mathrm{Tr}$ on $\mathcal{B}(H).$ In particular, this means that our $U$-$\bar\mu$-modules 
are exactly the finite dimensional $U$-invariant subspaces of $H.$
\begin{proposition}
 Let $\mathbf{A}=(A,\mu,\alpha)$ be a system and $\mathbf{F}$ be the trivial system i.e 
$F=\mathbb{C}1,$
 $\lambda=\mu|_F,$ and $\varphi=\alpha|_{F}.$ Then $\mathbf{A}$ has discrete spectrum relative to 
$\mathbf{F}$ if and only if $\mathbf{A}$ has discrete spectrum, i.e $H$ is spanned by 
the eigenvectors of $U.$
\end{proposition}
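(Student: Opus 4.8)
The plan is to lean on the reductions made in the paragraph preceding the statement. Since $F=\mathbb{C}1$, we have $H_F=\overline{\mathbb{C}\Omega}=\mathbb{C}\Omega$, the basic construction is all of $\mathcal{B}(H)$, and $\bar\mu$ may be taken to be the canonical trace $\mathrm{Tr}$. Because $\mathrm{Tr}(P_V)=\dim V$, the condition $\bar\mu(P_V)<\infty$ in Definition \ref{def_U_alpha_modules} becomes finite-dimensionality, so a $U$-$\bar\mu$-module is exactly a finite-dimensional $U$-invariant subspace of $H\ominus H_F$. With this identification the proposition reduces to comparing two spanning properties of $H$, and the whole argument rests on the finite-dimensional spectral theorem together with some bookkeeping around the fixed vector $\Omega$.

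First I would record two observations. (i) Since $U\Omega=\alpha(1)\Omega=\Omega$, the vector $\Omega$ is an eigenvector of $U$, so $H_F$ is spanned by an eigenvector; moreover, if $x$ is any eigenvector of $U$ with $Ux=\zeta x$, then $(1-\e)x$ is again an eigenvector of $U$ (with the same eigenvalue) lying in $H\ominus H_F$. Indeed, from $\innerproduct{\Omega}{x}=\innerproduct{U\Omega}{Ux}=\zeta\innerproduct{\Omega}{x}$ we get that either $\zeta\neq 1$, forcing $\innerproduct{\Omega}{x}=0$ and hence $(1-\e)x=x$, or $\zeta=1$, in which case $(1-\e)x$ is still a fixed vector (using that $U$ fixes $\e x\in\mathbb{C}\Omega$). (ii) Any finite-dimensional $U$-invariant subspace $V$ is spanned by eigenvectors of $U$, since $U|_V$ is a unitary on a finite-dimensional Hilbert space and is therefore diagonalizable.

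For the forward implication, suppose $\mathbf{A}$ has discrete spectrum relative to $\mathbf{F}$, that is $\mathcal{E}_{A/F}=H\ominus H_F$ by Definition \ref{defRDS}. By (ii) each $U$-$\bar\mu$-module is spanned by eigenvectors, hence so is $\mathcal{E}_{A/F}$, which is their closed span by Definition \ref{def_E(AF)}; adding the eigenvector $\Omega$ spanning $H_F$, the whole of $H=H_F\oplus(H\ominus H_F)$ is spanned by eigenvectors of $U$. Conversely, if $H$ is spanned by eigenvectors of $U$, then applying $1-\e$ and observation (i) shows that $H\ominus H_F=(1-\e)H$ is the closed span of the eigenvectors $(1-\e)x$; each nonzero such vector generates a one-dimensional $U$-$\bar\mu$-module and so lies in $\mathcal{E}_{A/F}$, giving $H\ominus H_F\subset\mathcal{E}_{A/F}$, while the reverse inclusion is immediate from Definition \ref{def_E(AF)}.

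The only step that needs genuine care is observation (i): the two notions of discrete spectrum are phrased with respect to different ambient spaces ($H$ versus $H\ominus H_F$), and what makes them agree is precisely that projecting an eigenvector off $\Omega$ preserves the eigenvector property. Everything else is the finite-dimensional spectral theorem and routine unwinding of the definitions.
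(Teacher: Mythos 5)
Your proof is correct and follows essentially the same route as the paper's: identify the $U$-$\bar\mu$-modules with finite-dimensional $U$-invariant subspaces, use one-dimensional modules spanned by eigenvectors in one direction and the finite-dimensional spectral theorem for unitaries in the other. Your observation (i) just makes explicit a reduction (that spanning $H$ by eigenvectors is equivalent to spanning $H\ominus\mathbb{C}\Omega$ by eigenvectors orthogonal to $\Omega$) which the paper's proof asserts without comment.
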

\begin{proof}
 Note that $\Omega$ is always a fixed point of $U$. Let $\mathcal{E}$ denote the set of all eigenvectors of 
$U$ orthogonal to $\Omega$. Assume that 
$\mathbf{A}$ has discrete spectrum, i.e. $\overline{\mathrm{span}\, \mathcal{E}}=H\ominus \mathbb{C}\Omega.$
 For $x\in \mathcal{E},$ 
 let 
 \[S_x:=\{sx: s\in \mathbb{C}\}.\]
 Then, it easy to verify that $S_x$ is a $U$-$\bar\mu$-module.
 Moreover,
 \[H\ominus H_F=\overline{\mathrm{span}\{S_x: x\in \mathcal{E}\}}.\]
Thus, $\mathbf{A}$ has discrete spectrum relative to $\mathbf{F}.$

Conversely, assume that $\mathbf{A}$ has discrete spectrum relative to $\mathbf{F}.$ Then, as remarked above,
all $U$-$\bar\mu$-modules $V$ have finite dimension, and they 
span $H\ominus \mathbb{C}\Omega$. As each such finite dimensional $U$-invariant space $V$
is spanned by eigenvectors of $U$, $H\ominus \mathbb{C}\Omega$ is as well. It follows that $\mathbf{A}$ has 
discrete spectrum.
\end{proof}

\section{Skew Products}\label{afdEU}
In order to show that the definition of relative discrete spectrum
(Definition \ref{defRDS}) is sensible, we still need to exhibit some examples. This is what we do in 
this 
section and the next.

In this section we focus on a skew product, starting with a classical system and extending it by a 
noncommutative one.

The following result will be useful for both examples:
\begin{proposition}\label{prop_basic_constr}
 Let $(B ,\nu)$ and $(C,\sigma)$ be von Neumann algebras with faithful normal tracial states $\nu$ 
and 
$\sigma,$ 
both in their GNS representations on the Hilbert spaces $H_\nu$ and $H_\sigma,$ with cyclic vectors 
$\Omega_\nu$ and $\Omega_\sigma$, respectively. Consider the von Neumann algebra tensor product $A 
:=B \bar\otimes C$ 
and the faithful normal state $\mu:=\nu\bar\otimes \sigma$.
Set $F:=B \otimes 1$ with state $\lambda:=\mu|_F.$ Then 
\[\innerproduct{A }{\e}=B \bar\otimes 
\mathcal{B}(H_\sigma).\]
The trace $\bar\mu$ of $\innerproduct{A }{\e}$ is given by 
\begin{equation}\label{eq_skew_product_state_normality}
\bar\mu(t)=\sum_{i\in \mathcal{I}} \innerproduct{\Omega_\nu\otimes h_i}{t(\Omega_\nu\otimes  h_i)}=\mu\bar\otimes \mathrm{Tr}(t),
\end{equation}
for all $t\in \innerproduct{A }{\e}^+,$ where $\{h_i:i\in \mathcal{I}\}$ is any orthonormal basis for 
$H_\sigma$ 
 and $\mathrm{Tr}$ is the canonical trace  on $\mathcal{B}(H_\sigma)$.
\end{proposition}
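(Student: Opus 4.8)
The plan is to prove the two assertions separately: first identify the von Neumann algebra $\innerproduct{A}{\e}$, and then pin down the trace $\bar\mu$ via its characterizing property.

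For the algebra, I would begin by locating $\e$ concretely. The GNS space of $(A,\mu)$ is $H=H_\nu\otimes H_\sigma$ with cyclic vector $\Omega=\Omega_\nu\otimes\Omega_\sigma$, and since $F=B\otimes 1$ we have
\[
H_F=\overline{F\Omega}=\overline{B\Omega_\nu}\otimes\mathbb{C}\Omega_\sigma=H_\nu\otimes\mathbb{C}\Omega_\sigma,
\]
so that $\e=1\otimes P_{\Omega_\sigma}$, where $P_{\Omega_\sigma}$ is the rank-one projection of $H_\sigma$ onto $\mathbb{C}\Omega_\sigma$. By \cite[Lemma 4.2.3]{SS}, already used in Proposition \ref{prop_right_modules_basic_constr}, $\innerproduct{A}{\e}=(JFJ)'$, so it suffices to compute this commutant. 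Next I would factor the modular conjugation. Since $\mu$ is a trace, $J$ is determined by $Jx\Omega=x^{*}\Omega$, and on elementary tensors this factorizes as $J=J_\nu\otimes J_\sigma$, where $J_\nu,J_\sigma$ are the modular conjugations of $(B,\nu)$ and $(C,\sigma)$. Hence $JFJ=\{J_\nu bJ_\nu\otimes 1:b\in B\}=B'\bar\otimes\mathbb{C}1$, using $J_\nu BJ_\nu=B'$ for the standard form of $B$. The commutation theorem for von Neumann tensor products then yields
\[
\innerproduct{A}{\e}=(B'\bar\otimes\mathbb{C}1)'=(B')'\bar\otimes(\mathbb{C}1)'=B\bar\otimes\mathcal{B}(H_\sigma),
\]
which is the first claim.

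For the trace, recall from \cite[Chapter 4]{SS} that $\bar\mu$ is the unique faithful normal semifinite tracial weight on $\innerproduct{A}{\e}$ satisfying $\bar\mu(a\e b)=\mu(ab)$ for all $a,b\in A$. I would therefore exhibit the product trace $\nu\bar\otimes\mathrm{Tr}$ (written $\mu\bar\otimes\mathrm{Tr}$ in the statement) as a candidate and invoke this uniqueness. Realizing $\nu$ as the vector state $\omega_{\Omega_\nu}$ and $\mathrm{Tr}$ as $\sum_{i\in\mathcal{I}}\omega_{h_i}$ over an orthonormal basis $\{h_i\}$, the product weight is precisely $t\mapsto\sum_{i\in\mathcal{I}}\innerproduct{\Omega_\nu\otimes h_i}{t(\Omega_\nu\otimes h_i)}$, which is normal, and as the tensor product of the finite trace $\nu$ with the canonical trace $\mathrm{Tr}$ it is faithful, semifinite and tracial. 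It then remains to verify the defining identity on elementary tensors: for $a=b_1\otimes c_1$ and $b=b_2\otimes c_2$ one computes $a\e b=b_1b_2\otimes c_1P_{\Omega_\sigma}c_2$, and since $\mathrm{Tr}(c_1P_{\Omega_\sigma}c_2)=\innerproduct{\Omega_\sigma}{c_2c_1\Omega_\sigma}=\sigma(c_1c_2)$ by the traciality of $\sigma$, this gives
\[
(\nu\bar\otimes\mathrm{Tr})(a\e b)=\nu(b_1b_2)\,\sigma(c_1c_2)=\mu(ab).
\]
Linearity and normality extend the identity to all of $A\e A$, so uniqueness forces $\bar\mu=\nu\bar\otimes\mathrm{Tr}$.

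I expect the main obstacle to lie in the trace part rather than the algebra identification. The factorization $J=J_\nu\otimes J_\sigma$ and the commutation theorem are standard, whereas making the trace argument airtight requires care: confirming that $\nu\bar\otimes\mathrm{Tr}$ is genuinely faithful, normal, semifinite and tracial in the infinite-dimensional case (where $H_\sigma$ may be non-separable and $\bar\mu$ is a weight rather than a state), checking that the displayed sum is independent of the chosen orthonormal basis, and ensuring that the uniqueness statement being applied is exactly the one characterizing $\bar\mu$ on the basic construction. Given these, the computation above closes the proof.
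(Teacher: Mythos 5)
Your identification of $\innerproduct{A}{\e}$ is essentially the paper's own argument: both pass through $\innerproduct{A}{\e}=(JFJ)'=JF'J$, the factorization $J=J_\nu\otimes J_\sigma$, and the commutation theorem for von Neumann tensor products (the paper cites \cite[Section 10.7, Lemma 1]{stratila1979lectures} for the latter). For the trace, however, you take a genuinely different route. The paper computes $\bar\mu$ directly from the formula of \cite[Lemma 4.3.4]{SS}: it chooses $v_i=1\otimes w_i$ with $w_iz=\innerproduct{J_\sigma h_i}{z}\Omega_\sigma$, checks that $v_i\in\innerproduct{A'}{\e}$ and $\sum_i v_i^{\ast}\e v_i=1$, and reads off $\bar\mu(t)=\sum_i\innerproduct{Jv_i^{\ast}\Omega}{tJv_i^{\ast}\Omega}=\sum_i\innerproduct{\Omega_\nu\otimes h_i}{t(\Omega_\nu\otimes h_i)}$; the identification with $\mu\bar\otimes\mathrm{Tr}$ then comes afterwards via \cite[Theorem 8.2]{S}. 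You instead exhibit $\nu\bar\otimes\mathrm{Tr}$ as a candidate normal semifinite faithful trace satisfying the normalization $\bar\mu(a\e b)=\mu(ab)$ and appeal to uniqueness. This is a legitimate and arguably more conceptual argument, but two of the steps you flag only as ``obstacles'' genuinely need to be supplied. First, the uniqueness of the normal semifinite faithful trace on $\innerproduct{A}{\e}$ with the stated normalization is nowhere used in the paper, so you must cite or prove it explicitly (it is standard for the basic construction, but it is the load-bearing step of your argument). Second, your verification of $(\nu\bar\otimes\mathrm{Tr})(a\e b)=\mu(ab)$ is carried out only for $a,b$ elementary tensors in $B\odot C$, whereas $A\e A$ involves arbitrary $a,b\in B\bar\otimes C$; linearity alone does not bridge this, and you need either a density/normality argument or, more cleanly, the traciality of the candidate weight together with $\e b a\e=D(ba)\e$ to get $(\nu\bar\otimes\mathrm{Tr})(a\e b)=(\nu\bar\otimes\mathrm{Tr})(D(ba)\e)=\mu(ab)$ for all $a,b\in A$ at once. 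With these two points closed your proof is complete; its advantage is that it avoids the explicit family $v_i$, while the paper's computation has the advantage of producing the orthonormal-basis sum (the first equality of \eqref{eq_skew_product_state_normality}) as an immediate by-product, which is the form actually used later in Example \ref{ex_skew_product}.
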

\begin{proof}
 Let $J_\nu$, $J_\sigma$ and $J=J_\nu\otimes J_\sigma$ denote the modular conjugation operators 
associated to $\nu$, $\sigma$ and 
$\mu$,
respectively. 
By \cite[Lemma 4.2.3]{SS} 
and \cite[Section 10.7 Lemma 1]{stratila1979lectures} we have
\begin{equation}\label{eq_tp_calc}
 \innerproduct{A }{\e}
 =JF'J 
 =(J_\nu B' J_\nu)\bar\otimes (J_\sigma \mathcal{B}(H_\sigma)J_\sigma)
 =B \bar\otimes\mathcal{B}(H_\sigma).
\end{equation}

We compute the trace $\bar\mu$ using \cite[Lemma 4.3.4]{SS}. To do this, we need elements $v_i$ 
of $\innerproduct{A '}{\e}$ for $i\in \mathcal{I}$ such that $\sum_{i\in \mathcal{I}} v^*_i\e 
v_i=1$ (see Remark \ref{rem_net_vi} below).
Let 
\[v_i=1\otimes w_i\]
where, for all $z\in H_\sigma,$ $w_i\in \mathcal{B}(H_\sigma)$ is defined by 
\[w_iz:=\innerproduct{J_\sigma h_i}{z}\Omega_\sigma.\]

Note that,
\begin{align}
 \innerproduct{A '}{\e}&=\innerproduct{JA  J}{J\e J}=J\innerproduct{A }{\e}J\nonumber\\
                       &=(J_\nu B  J_\nu)\bar\otimes (J_\sigma B  (H_\sigma)J_\sigma) \nonumber\\
					   &=B'\bar\otimes \mathcal{B}(H_\sigma).\nonumber
\end{align}
So we have $v_i\in \innerproduct{A '}{\e}$.

In terms of the projection $P$ of $H_\sigma$ onto 
$\mathbb{C}\Omega_\sigma$ we have $\e=1\otimes P$, since $H=H_\nu\otimes H_\sigma$
and $H_F=H_\nu\otimes (\mathbb{C}\Omega_\sigma)$. Hence
\begin{align}
 v_i^{\ast}\e v_i&=1\otimes w^{\ast}_iP w_i.\nonumber
\end{align}

For each $i,$ the linear operator $w^{\ast}_i Pw_i$ is the projection of $H_\sigma$ onto 
$\mathbb{C}J_\sigma h_i.$
Hence, 
\begin{equation}\label{eq_sum_par_isom}
 \sum_{i\in \mathcal{I}} v^{\ast}_i\e v_i=1.
\end{equation}

Thus, applying the formula in \cite[Lemma 4.3.4]{SS} in terms of $\Omega=\Omega_\nu\otimes\Omega_\sigma$, for 
all $t\in \innerproduct{A }{\e}^+$,
\begin{align}
 \bar\mu(t)&=\sum_{i\in \mathcal{I}}\innerproduct{Jv^{\ast}_i\Omega}{tJv^{\ast}_i\Omega}\nonumber\\
       	   &=\sum_{i\in \mathcal{I}} \innerproduct{\Omega_\nu\otimes h_i}{t(\Omega_\nu\otimes 
 h_i)}.\nonumber
\end{align}
Since $\bar\mu$ is faithful and the first equality of \eqref{eq_skew_product_state_normality} 
holds, it follows from  
 \cite[Theorem 8.2]{S} that the second equality of \eqref{eq_skew_product_state_normality} holds.
\end{proof}
\begin{remark}\label{rem_net_vi}
 \cite[Lemma 4.3.4]{SS}  
 requires a net $(v_i)$ satisfying \eqref{eq_sum_par_isom}. However,  
 the assumption that $\mathcal{I}$ is a directed set is not used, neither in the proof of \cite[Lemma 
4.3.4]{SS} 
nor in any results that \cite[Lemma 4.3.4]{SS} 
depends on.  
\end{remark}

We now turn to the skew product.
Let $(X,\mathcal{X},\rho)$ be a standard probability space with compact Hausdorff space $X$ and 
Borel measure 
$\rho.$ 
We let $S:X\rightarrow X$ be an invertible map such that $S^{-1}\mathcal{X}\subset \mathcal{X}$ and 
$S\mathcal{X}\subset \mathcal{X}$, and which is measure preserving with respect to $\rho,$ 
that is,
\[  \rho(K)=\rho(S^{-1}(K)),\] 
for all $K\in \mathcal{X}$.

We set 
\[B:=L^\infty(\rho),\, \Omega_\nu:=1,\, \nu(f):=\int_X f\romd\rho\text{ and }\beta:B\rightarrow B:f\mapsto f\circ 
S.\] 
Then 
$\mathbf{B}$ is a system if we view $B $ as operators acting on $L^2(\rho)$ via pointwise multiplication: for 
every $f\in 
L^\infty(\rho),$ we have an operator 
\[M_f:L^2(\rho)\rightarrow L^2(\rho):g\mapsto fg.\]

We let 
\[\mathbf{C}=(C,\sigma,\gamma)\] 
be a system such that $H_\sigma$ in Proposition \ref{prop_basic_constr} is separable. Denote the unitary 
representation of $\gamma$ on $H_\sigma$ by
$U_\gamma$.

Now put
\[A :=B \bar\otimes C.\] Then \[(L^2(\rho)\otimes H_\sigma, \mathrm{id}_A , 1\otimes \Omega_\sigma)\] 
is 
the GNS triple for $A $ associated to the product state 
\[\mu:=\nu\bar\otimes\sigma.\] Put  
\[F:=B \otimes 1\] 
and let $\lambda:= \mu|_{F}.$

%\textbf{This follows Bindingsbl867-873}
We construct the skew product dynamics $\alpha$ on $A $ using the theory of direct integrals (see 
for 
example \cite{nielsen1980} and \cite[Section IV.8]{Tak1}). 
Consider the space of $H_\sigma$-valued $\rho$-square integrable functions $L^2(\rho;H_\sigma)$. 
Then $L^\infty(\rho)$ is $\ast$-isomorphic to the von Neumann algebra 
$\mathcal{M}$ of all
diagonalizable operators on $L^2(\rho; H_\sigma)\cong L^2(\rho)\otimes H_\sigma $ (\cite[Proposition 
5.2]{nielsen1980}). 
In effect, any $f\in L^\infty(\rho)$
is identified with $M_f\otimes 1$. Furthermore,  $1\otimes \Omega_\sigma$ is represented by
$\Omega\in L^2(\rho, H_\sigma)$ given by $\Omega(p)=\Omega_\sigma$ for all $p\in X$.
If we put $\mathcal{N}(p)=C$ for all $p\in X,$ then
from \cite[Corollary 19.9]{nielsen1980} 
 and its proof
we have the isomorphism
\[\int^\oplus_X C \romd\rho(p):=\int^\oplus_X \mathcal{N}(p)\romd\rho(p) \cong B 
\bar\otimes C.\]
We identify $A=B\bar\otimes C$ with this integral in the remainder of this section.
The elements $a=\int^\oplus_X a(p) \romd\rho(p)$ of $\int^\oplus_X C \romd\rho$ 
consist of decomposable 
operators with $a(p)\in \mathcal{B}(H_\sigma)$ for all $p\in X$, such that

\[\norm{a(\cdot)}\in L^\infty(\rho),\]
and for any $z\in L^2(\rho;H_\sigma)$ the element $az\in L^2(\rho, H_\sigma)$ is given by 
\[(az)(p)=a(p)z(p)\]
for all $p\in X$.
Moreover, from \cite[Theorem IV.8.18]{Tak1}, we have $ a(p)\in C$. Thus, we may represent each $a\in 
\int^\oplus_X C \romd\rho$ by a map $a:X\rightarrow C:p\mapsto a(p).$ In particular, $a=b\otimes c\in A$ is 
given by $a(p)=b(p)c,$ for any $b\in B=L^\infty(\nu)$ and $c\in C$.

Let \[k:X\rightarrow \mathbb{Z}\] be any measurable map.
For $a\in \int^\oplus_X C \romd\rho,$ define for all $p\in X,$
\begin{equation}\label{eq_dyn_skew}
 \alpha(a)(p):=\gamma^{k(p)}(a(Sp)).
\end{equation}
Then $\alpha$ is the skew product dynamics, where $k$ acts as the generator of a cocycle.
It is straightforward to verify that $\alpha$ is a well-defined $\ast$-automorphism of $A$ leaving 
$\mu$ invariant, i.e. that $\mathbf{A}=(A,\mu,\alpha)$ is a system.

Notice that $F$ is invariant under $\varphi=\alpha|_F$, since for all $p\in X,$
\begin{align}
 \alpha(b\otimes 1)(p)=(b\circ S)\otimes 1.
\end{align}

We describe the unitary representation $U$ of $\alpha$.
Note first that
\begin{align}
(Ua\Omega)(p)&=(\alpha(a)\Omega)(p)=\alpha(a)(p)\Omega(p)=\gamma^{k(p)}
(a(Sp))\Omega_\sigma\nonumber\\
&=U^{k(p)}_\gamma(a(Sp)\Omega_\sigma)=U^{k(p)}_\gamma(a\Omega)(Sp).\nonumber
\end{align}
Let $x\in \int^\oplus_X H_\sigma \romd\rho(p)$ and approximate $x$ by a sequence 
$(x_n)=(a_n\Omega)$ in $A \Omega$.

Since, 
\begin{align}
 \int_X \norm{x_n(Sp)-x(Sp)}^2\romd\rho(p)&=\norm{x_n-x}^2
		\rightarrow 0\quad\text{ as } n\rightarrow 
\infty,\nonumber
\end{align}
it follows as in the proof of the completeness of $L^p$ spaces, that
there is a subsequence $(\norm{x_{n_i}(Sp)-x(Sp)})$ which tends to $0$ except for $p$ in a null set
$N_0\subset X$. 

Thus,
\[(Ux)(p)=\lim_i U^{k(p)}_\gamma x_{n_i}(Sp)=U^{k(p)}_\gamma x(Sp),\]
for all $p\in X\backslash N_0$. Without loss, we may define $Ux$ such that this holds for 
all $p\in X$. Then it follows that
\begin{equation}\label{eq_inverse_Uskewprod}
 (U^{-1}x)(p)=U_\gamma^{-k(S^{-1}p)}x(S^{-1}p).
\end{equation}

%\{\textbf{The concrete example for $\mathbf{C}$ begins now}\}
To conclude, we discuss a concrete example of $C.$ The main points from this example are summarized 
in Proposition
\ref{prop_summary_skew_product}.
\begin{example}\label{ex_skew_product}
Let $G$ be a countable group endowed with the discrete topology and let $T:G\rightarrow 
G$ be any group automorphism such that for 
each $g\in 
G$ the orbit of $g,$ $T^\mathbb{Z}g:=\{T^ng: n\in \mathbb{Z}\}$, is a finite set (we refer to 
$T^\mathbb{Z}g$ as a \emph{finite orbit}). Consider the dual system on 
\[C:=\mathfrak{L}(G),\] the group von 
Neumann 
algebra of $G.$ 
 Thus, $C$ is the von Neumann algebra on
$\ell^2(G)$ generated by the following set of unitary operators:
\begin{equation}
\{l(g):g\in G\}
\end{equation}
where $l$ is the left regular representation of $G$, i.e. the unitary
representation of $G$ on $\ell^2(G)$ with each $l(g):\ell^2(G)\rightarrow \ell^2(G)$ given by
\[
\lbrack l(g)f](h)=f(g^{-1}h)
\]
for all $f\in \ell^2(G)$ and $g,h\in G$. Equivalently,
\[
l(g)\delta_{h}=\delta_{gh}
\]
for all $g,h\in G$, where $\delta_{g}\in \ell^2(G)$ is defined by $\delta_{g}(g)=1$
and $\delta_{g}(h)=0$ for $h\neq g$. Setting
\[
\Omega_\sigma:=\delta_{1}
\]
where $1\in G$ denotes the identity of $G$, we can define a faithful normal
trace $\sigma$ on $B$ by
\[
\sigma(a):=\innerproduct{\Omega_\sigma}{a\Omega_\sigma} 
\]
for all $a\in C$. 
 It follows that $(\ell^2(G),\mathrm{id}_{C},\Omega_\sigma)$ is the cyclic
representation of $(C,\sigma)$.

We have a unitary $U_{\gamma}:\ell^2(G)\rightarrow \ell^2(G),$ defined by
\[U_\gamma(f)=f\circ T.\]

We define a $*$-automorphism $\gamma$ on $C$ by
\[\gamma(c)=U_\gamma c U^{\ast}_\gamma,\]
for all $c\in C$. Then, $(C, \sigma, \gamma)$ is a system.

Using Proposition \ref{prop_basic_constr}, the basic construction is given  
by \[\innerproduct{A }{\e}=L^\infty(\rho) \bar\otimes \mathcal{B}(\ell^2(G)).\]

For each $g\in G$ let
\[R_g:=\mathrm{span}\,(U^\mathbb{Z}_\gamma \delta_g)\]
and let $Q_g$ be the projection of $\ell^2(G)$ onto $R_g$.
Set 
\[V_g:=L^2(\rho)\otimes R_g\]
and let $P_g=1\otimes Q_g$ be the projection of 
$H:=L^2(\rho)\otimes \ell^2(G)$ onto $V_g$.

We have
\begin{align}
\bar\mu(P_g)=\sum_{h\in G} \innerproduct
{\Omega_\nu\otimes \delta_h}{P_g(\Omega_\nu\otimes\delta_h)} %\nonumber\\
=\sum_{h\in G}\innerproduct{\delta_h}{Q_g\delta_h}=\mathrm{dim}(R_g)<\infty,\nonumber
\end{align}
since all orbits are finite.

The $V_g$'s , for $g\neq 1$, span $H\ominus H_F=L^2(\rho)\otimes \Omega^\perp_\sigma$, since the 
$R_g$'s span $\Omega^\perp_\sigma$.
As $R_g$ is spanned by an orbit, we have $U_\gamma R_g=R_g$. It follows that if
$x\otimes y\in V_g$, then,
\begin{align}U(x\otimes y)(p)=U^{k(p)}_\gamma(x\otimes y)(Sp)
=U^{k(p)}_\gamma(x(Sp) y)=x(Sp)U^{k(p)}_\gamma y\in R_g,\nonumber
\end{align}
for all $p\in X$, since $x\otimes y$ is represented by 
$p\mapsto x(p)y$ in $\int^\oplus_X H_\sigma\romd(\rho)$. Hence $U(x\otimes y)\in L^2(\rho)\otimes 
R_g$, so  $UV_g\subset V_g$. Using \eqref{eq_inverse_Uskewprod}, it similarly follows that
$U^{-1}V_g\subset V_g$, so $UV_g=V_g$.

The $V_g$'s are trivially right-$F$-modules, since $F=L^\infty(\rho)\otimes 1$. Hence
the $V_g$'s are indeed $U$-$\bar\mu$-modules which (when excluding $g=1$) span
$H\ominus H_F$ as required by Definition \ref{defRDS}.
\end{example}

We briefly summarize:
\begin{proposition}\label{prop_summary_skew_product}Consider a dual system $\mathbf{C}$ generated from a 
discrete countable group 
$G$, with automorphism $T:G\rightarrow G$ with finite orbits, and a 
classical system $\mathbf{B}$ obtained from a standard measure-preserving 
system $(X,\mathcal{X},\rho, S)$.
Form the system $(B\bar\otimes C, \mu, \alpha)$ with $\mu$ as a vector state from $1\otimes \delta_1$ 
and dynamics given by equation
\eqref{eq_dyn_skew}. Then $(B\bar\otimes C, \mu, \alpha)$ has discrete spectrum relative to $(B\otimes 
1, \mu|_{B\otimes 
1}, 
\alpha|_{B\otimes 
1}).$

\end{proposition}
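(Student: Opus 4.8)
The plan is to verify Definition~\ref{defRDS} directly, that is, to show $\mathcal{E}_{A/F}=H\ominus H_F$. Since $\mathcal{E}_{A/F}\subseteq H\ominus H_F$ always holds by Definition~\ref{def_E(AF)}, it suffices to exhibit a family of $U$-$\bar\mu$-modules whose closed linear span is all of $H\ominus H_F$. This is precisely the content assembled in Example~\ref{ex_skew_product}, so the proof consists of collecting those pieces and confirming each of the three defining requirements of a $U$-$\bar\mu$-module (Definition~\ref{def_U_alpha_modules}) for the candidate modules.

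First I would use the finite-orbit hypothesis on $T$. For each $g\in G$ the orbit space $R_g:=\mathrm{span}(U^\mathbb{Z}_\gamma\delta_g)$ is finite dimensional, and since the $\delta_g$ with $g\neq 1$ form an orthonormal basis of $\Omega^\perp_\sigma$, the subspaces $R_g$ (for $g\neq 1$) span $\Omega^\perp_\sigma$. Setting $V_g:=L^2(\rho)\otimes R_g$ then gives closed subspaces of $H\ominus H_F=L^2(\rho)\otimes\Omega^\perp_\sigma$ whose closed span is all of $H\ominus H_F$; note that $g\neq 1$ guarantees $V_g\subset H\ominus H_F$, since the automorphism $T$ fixes the identity and hence $1$ lies in no orbit $T^\mathbb{Z}g$ with $g\neq 1$.

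Next I would check the remaining defining properties. The finiteness condition $\bar\mu(P_g)<\infty$ comes from Proposition~\ref{prop_basic_constr}: writing $P_g=1\otimes Q_g$, the trace formula \eqref{eq_skew_product_state_normality} yields $\bar\mu(P_g)=\mathrm{dim}(R_g)<\infty$. The right-$F$-module property is immediate, because $F=L^\infty(\rho)\otimes 1$ acts on the first tensor leg only, while $R_g$ sits in the second, so $V_g F\subset V_g$ trivially.

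The only genuinely computational step, and the one I expect to be the main obstacle, is establishing $U$-invariance $UV_g=V_g$ despite the cocycle $k(p)$ appearing in the dynamics \eqref{eq_dyn_skew}. Here I would use that $U^\mathbb{Z}_\gamma R_g=R_g$ (as $R_g$ is spanned by a full $U_\gamma$-orbit), so that for a decomposable vector represented by $p\mapsto x(p)y\in V_g$ one obtains $(U(x\otimes y))(p)=x(Sp)\,U^{k(p)}_\gamma y$, which lies in $R_g$ in each fibre; hence $UV_g\subset V_g$, and the reverse inclusion follows in the same way from \eqref{eq_inverse_Uskewprod}. With these checks complete, each $V_g$ (for $g\neq 1$) is a $U$-$\bar\mu$-module and their span exhausts $H\ominus H_F$, so $\mathcal{E}_{A/F}=H\ominus H_F$ and $\mathbf{A}$ has discrete spectrum relative to $\mathbf{F}$.
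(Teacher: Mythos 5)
Your proposal is correct and follows essentially the same route as the paper, which proves this proposition by assembling exactly these pieces in Example \ref{ex_skew_product}: the finite-dimensional orbit spaces $R_g$, the modules $V_g=L^2(\rho)\otimes R_g$ with $\bar\mu(P_g)=\dim(R_g)<\infty$ via Proposition \ref{prop_basic_constr}, the trivial right-$F$-module property, and $U$-invariance via the fibrewise cocycle computation together with \eqref{eq_inverse_Uskewprod}. Your explicit observation that $g\neq 1$ forces $V_g\subset H\ominus H_F$ because $T$ fixes the identity is a small clarification the paper leaves implicit, but it does not change the argument.
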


Taking $G$ to be the free group on a finite or countable set of symbols, with $T$ induced by a 
finite orbit bijection of the symbols,  
provides a concrete and non-trivial realization of $C$.
\section{Finite Extensions}\label{sec_finite_ext}

In this section we present a second example of relative discrete spectrum. In this case, unlike the 
previous section, we start with a noncommutative system and extend it by a finite dimensional 
noncommutative system (hence the name ``finite extension'').

Let $M_n=M_n(\mathbb{C})$ denote the 
$n\times n$ matrices over $\mathbb{C}$.
\begin{definition}\label{def_finite_ext}
Consider a system $\mathbf{B}=(B,\nu, \beta).$ Let $n\in \mathbb{N}$.  
Consider the von Neumann algebra
$A =B\odot 
M_{n}$ with %(\cite[Proposition 11.2.3]{KadRing2})
faithful normal trace $\mu=\nu\odot  \mathrm{tr},$ where $\mathrm{tr}$ is the normalized 
trace on 
$M_n$.  Suppose 
further that there is a 
$*$-automorphism $\alpha$ of $A $ such that $\alpha(b\otimes 1)=\beta(b)\otimes 1.$  Represent $\mathbf{B}$ 
as the subsystem $\mathbf{F}$ of $\mathbf{A}$ given by $F=B\otimes 1$, $\lambda(b\otimes 1)=\nu(b)$ and 
$\varphi(b\otimes 1)=\beta(b)\otimes 1$.
Then we refer to $\mathbf{A }=(A ,\mu, 
\alpha)$ as 
a \emph{finite extension of} $\mathbf{F}$. Equivalently, we say that $\mathbf{A}$ is a \emph{finite 
extension} of $\mathbf{B}$.
\end{definition}

Note that we can  
view $B \odot M_n$ as all $n\times n$ matrices with entries in $B$. 
%\cite[Remark 11.2.3]{KadRing2}

There is a general reason why finite extensions are isometric extensions (Proposition 
\ref{prop_finite_ext}): if the trace on the basic construction is finite, we automatically have relative discrete spectrum, 
as we now show (Corollary \ref{cor_fin_auto_rds}).

\begin{proposition}\label{prop_HA -HF}Let $\mathbf{A }$ be a system with subsystem $\mathbf{F}.$
 Then the subspace $H  \ominus H_F$ is a $U$-invariant right $F$-submodule.
 \end{proposition}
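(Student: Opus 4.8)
The plan is to verify the two defining properties separately: that $H\ominus H_F$ is $U$-invariant, and that it is a right-$F$-submodule.

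For the $U$-invariance I would first establish that $H_F$ itself is $U$-invariant and then pass to the orthogonal complement. Since $\alpha|_F=\varphi$ and $\varphi$ is a $*$-automorphism of $F$, for any $f\in F$ we have $Uf\Omega=\alpha(f)\Omega=\varphi(f)\Omega\in F\Omega$, and because $\varphi$ is bijective this gives $U(F\Omega)=F\Omega$ exactly. Taking closures and using continuity of $U$ yields $UH_F=H_F$. Now $U$ is unitary with $UH_F=H_F$, hence also $U^{-1}H_F=H_F$; for $x\in H\ominus H_F$ and $v\in H_F$ one then computes $\innerproduct{Ux}{v}=\innerproduct{x}{U^{-1}v}=0$ since $U^{-1}v\in H_F$, so $U(H\ominus H_F)\subset H\ominus H_F$, and the reverse inclusion follows in the same way from $U^{-1}$. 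Thus $U(H\ominus H_F)=H\ominus H_F$.

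For the right-$F$-submodule property I would invoke Proposition \ref{prop_right_modules_basic_constr}, which says that a closed subspace $V$ is a right-$F$-submodule precisely when its projection $P_V$ lies in the basic construction $\innerproduct{A}{\e}$. Here the relevant projection is $P_{H\ominus H_F}=1-\e$. Since $1\in A\subset\innerproduct{A}{\e}$ and $\e\in\innerproduct{A}{\e}$ by construction, we get $1-\e\in\innerproduct{A}{\e}$, and the proposition immediately gives that $H\ominus H_F$ is a right-$F$-submodule.

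I do not anticipate a serious obstacle; the only point requiring care is the $U$-invariance of $H_F$, where one must use that $\varphi$ is an automorphism (not merely an endomorphism) of $F$ to obtain the equality $U(F\Omega)=F\Omega$, and hence genuine invariance rather than mere inclusion, consistent with the excerpt's emphasis on writing $UV=V$. Alternatively, one could bypass Proposition \ref{prop_right_modules_basic_constr} and argue directly that $j(a)$ commutes with $\e$ for $a\in F$ (equivalently $\e\in(JFJ)'$ by \cite[Lemma 4.2.3]{SS}), so that $j(a)$ preserves both $H_F$ and its complement; but routing through the already-established proposition is the cleaner option.
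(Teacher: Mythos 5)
Your proposal is correct and follows essentially the same route as the paper: the right-$F$-submodule property via $P_{H\ominus H_F}=1-\e\in\innerproduct{A}{\e}$ and Proposition \ref{prop_right_modules_basic_constr}, and $U$-invariance via $UH_F=H_F$ (from $\alpha(F)=F$) together with the orthogonality computation $\innerproduct{Ux}{y}=\innerproduct{x}{U^{*}y}=0$. Your slightly more detailed justification that $U(F\Omega)=F\Omega$ before taking closures is a harmless elaboration of the paper's one-line appeal to $\alpha(F)=F$.
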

 \begin{proof}
  Consider $H  \ominus H_F$ and its corresponding projection $1_A -\e.$ Since $1_A -\e\in 
\innerproduct{A }{\e},$ $H  \ominus H_F$ is a right 
$F$-module using Proposition \ref{prop_right_modules_basic_constr}. Furthermore, since 
$\alpha(F)=F,$ we have $U^{\ast} H_F=H_F.$ Consequently, for $x\in H  \ominus H_F$  and $y\in 
H_F,$ we have 
\begin{equation}\label{eq_inv_HA -HF}
\innerproduct{U x}{y}=\innerproduct{x}{U^{\ast} y}=0,
\end{equation}
so that $U(H  \ominus H_F)\subset H  \ominus H_F.$ Similarly, we have $U^{\ast}(H  \ominus H_F)\subset H  
\ominus H_F.$
 \end{proof}

\begin{corollary}\label{cor_fin_auto_rds}
Suppose that $\mathbf{A }$ is a system with subsystem $\mathbf{F}$ and assume that 
$\bar\mu$  is finite, in the sense that $\bar\mu(x)<\infty$, for every $x\in \innerproduct{A }{\e}^+.$ 
Then 
$\mathbf{A }$ has discrete spectrum relative to $\mathbf{F}.$
\end{corollary}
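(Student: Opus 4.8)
The plan is to observe that, under the finiteness hypothesis, the entire space $H\ominus H_F$ is itself a single $U$-$\bar\mu$-module, so that it is trivially spanned by $U$-$\bar\mu$-modules. This reduces the corollary to checking the three defining conditions of Definition \ref{def_U_alpha_modules} for the candidate $V=H\ominus H_F$.

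First I would invoke Proposition \ref{prop_HA -HF}, which tells us that $H\ominus H_F$ is a $U$-invariant right $F$-submodule, and that its associated projection is $1_A-\e\in\innerproduct{A}{\e}$. Since $V\subset H\ominus H_F$ holds trivially (with equality), the only remaining requirement from Definition \ref{def_U_alpha_modules} is the trace bound $\bar\mu(P_V)<\infty$. Here I would note that $P_V=1_A-\e$ is a projection, hence a positive element of $\innerproduct{A}{\e}^+$, so the finiteness assumption on $\bar\mu$ applies directly and yields
\[
\bar\mu(P_V)=\bar\mu(1_A-\e)<\infty.
\]

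Thus $H\ominus H_F$ satisfies all the conditions of Definition \ref{def_U_alpha_modules} and is itself a $U$-$\bar\mu$-module. Since $\mathcal{E}_{A /F}$ is, by Definition \ref{def_E(AF)}, the closed subspace of $H\ominus H_F$ spanned by all $U$-$\bar\mu$-modules, it both contains $H\ominus H_F$ (as $H\ominus H_F$ is one such module) and is contained in $H\ominus H_F$. Hence $\mathcal{E}_{A /F}=H\ominus H_F$, which is exactly the condition for $\mathbf{A}$ to have discrete spectrum relative to $\mathbf{F}$ in the sense of Definition \ref{defRDS}. There is no real obstacle in this argument; the only point requiring a moment's care is the verification that $1_A-\e$ lies in $\innerproduct{A}{\e}^+$, so that the global finiteness of $\bar\mu$ can be applied to it, and this is immediate once Proposition \ref{prop_HA -HF} supplies the membership $1_A-\e\in\innerproduct{A}{\e}$.
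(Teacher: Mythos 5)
Your argument is correct and is essentially the same as the paper's own proof: the paper likewise applies Proposition \ref{prop_HA -HF} to see that $H\ominus H_F$ is a $U$-invariant right $F$-submodule and then uses $\bar\mu(1_A-\e)<\infty$ to conclude that $H\ominus H_F$ is itself a $U$-$\bar\mu$-module spanning $H\ominus H_F$. Your version just spells out the verification of Definition \ref{def_U_alpha_modules} in slightly more detail.
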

\begin{proof}
 Since $\bar\mu(1_A -\e )<\infty,$  $H  \ominus H_F$ is spanned by a $U$-$\bar\mu$-module, 
namely
itself. 
\end{proof}

Since the basic construction of a finite dimensional von Neumann algebra is again 
finite dimensional, the 
trace on the basic construction is finite and we have:
\begin{corollary}
 Every system on a finite dimensional von Neumann 
algebra has discrete spectrum relative to every subsystem.
\end{corollary}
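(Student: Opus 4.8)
The plan is to reduce this to Corollary \ref{cor_fin_auto_rds} by showing that the trace $\bar\mu$ on the basic construction is finite whenever $A$ is finite dimensional. The key observation is that finite dimensionality of $A$ forces the whole ambient Hilbert space, and hence the basic construction, to be finite dimensional.

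First I would note that, since $A$ is finite dimensional and $\Omega$ is cyclic, the space $A\Omega$ is already finite dimensional and therefore closed, so that $H=\overline{A\Omega}=A\Omega$ is finite dimensional. Consequently $\mathcal{B}(H)$ is finite dimensional, and since $\innerproduct{A }{\e}$ is by construction a von Neumann subalgebra of $\mathcal{B}(H)$, it too is finite dimensional.

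Next I would argue that any faithful normal tracial weight on a finite dimensional von Neumann algebra is finite on every positive element. This follows from the structure theory: $\innerproduct{A }{\e}$ is a finite direct sum of matrix algebras $\bigoplus_{k} M_{n_k}(\mathbb{C})$, and a tracial weight on it decomposes as $\bar\mu(x)=\sum_{k} c_k \mathrm{Tr}(x_k)$ with scalars $c_k\geq 0$, where each $\mathrm{Tr}$ is the canonical trace on the corresponding block and $x_k$ is the component of $x$ in that block. Since each block is finite dimensional, $\mathrm{Tr}(x_k)<\infty$ for every positive $x_k$, and hence $\bar\mu(x)<\infty$ for all $x\in \innerproduct{A }{\e}^{+}$.

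With $\bar\mu$ shown to be finite in the sense required, Corollary \ref{cor_fin_auto_rds} immediately yields that $\mathbf{A}$ has discrete spectrum relative to $\mathbf{F}$, and since $\mathbf{F}$ was an arbitrary subsystem this gives the claim. I do not anticipate a genuine obstacle here; the only point requiring care is confirming that finite dimensionality transfers from $A$ to the basic construction via the finite dimensionality of $H$, after which the finiteness of $\bar\mu$ is automatic.
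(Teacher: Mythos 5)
Your proposal is correct and follows essentially the same route as the paper: the paper's justification is precisely that the basic construction of a finite dimensional von Neumann algebra is again finite dimensional, hence $\bar\mu$ is finite, and Corollary \ref{cor_fin_auto_rds} applies. You merely spell out the two intermediate facts (finite dimensionality of $H=A\Omega$ and hence of $\innerproduct{A}{\e}$, and finiteness of a faithful normal semifinite tracial weight on a finite dimensional algebra) that the paper leaves implicit.
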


Another example follows from \cite[Proposition 
3.1.2]{JonSun1997}:
\begin{corollary}
    Suppose that both $A $ and $F$ are type $\mathrm{II}_1$ factors and that
    their index $[A :F]$ is finite. Then $\mathbf{A }$ has discrete spectrum relative to 
$\mathbf{F}.$
\end{corollary}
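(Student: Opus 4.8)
The plan is to obtain this directly from Corollary \ref{cor_fin_auto_rds}, which reduces the statement to a single verification: under the finite-index hypothesis, the tracial weight $\bar\mu$ on $\innerproduct{A }{\e}$ is finite, i.e. $\bar\mu(x)<\infty$ for every $x\in\innerproduct{A }{\e}^+$. Once this is established, Corollary \ref{cor_fin_auto_rds} applies verbatim and yields that $\mathbf{A }$ has discrete spectrum relative to $\mathbf{F}$, so no further argument is needed.

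First I would invoke \cite[Proposition 3.1.2]{JonSun1997}. For an inclusion $F\subset A $ of type $\mathrm{II}_1$ factors with $[A :F]<\infty$, this identifies the basic construction $\innerproduct{A }{\e}$ as again a type $\mathrm{II}_1$ factor (with the index relation $[\innerproduct{A }{\e}:A ]=[A :F]$). The only feature I need from this is that $\innerproduct{A }{\e}$ is a \emph{finite} factor rather than a type $\mathrm{II}_\infty$ one. Next I would use uniqueness of the trace: since $\innerproduct{A }{\e}$ is a type $\mathrm{II}_1$ factor it carries a unique faithful normal tracial state $\mathrm{Tr}$, and every faithful normal semifinite tracial weight on it—in particular $\bar\mu$—is a positive scalar multiple of $\mathrm{Tr}$, hence finite. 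Concretely, with the present normalization $\bar\mu(\e)=\bar\mu(1\cdot\e\cdot 1)=\mu(1)=1$, while $\mathrm{Tr}$ assigns $\e$ the value $[A :F]^{-1}$; comparing gives $\bar\mu=[A :F]\cdot\mathrm{Tr}$ and therefore $\bar\mu(1)=[A :F]<\infty$. In particular $\bar\mu(x)\le\norm{x}\,\bar\mu(1)<\infty$ for all $x\in\innerproduct{A }{\e}^+$.

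The one step deserving care is the appeal to subfactor theory in the first paragraph above: what does the work is not semifiniteness of $\bar\mu$ (which always holds and by itself is not enough, as the type $\mathrm{II}_\infty$ case shows), but rather the fact that finiteness of the Jones index forces $\innerproduct{A }{\e}$ to be of type $\mathrm{II}_1$, so that its canonical trace—and hence $\bar\mu$—is finite on the identity. With that point settled, the finiteness hypothesis of Corollary \ref{cor_fin_auto_rds} is met and the conclusion follows at once.
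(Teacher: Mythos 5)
Your argument is correct and is exactly the route the paper intends: the paper gives no written proof, simply citing \cite[Proposition 3.1.2]{JonSun1997} (finite index forces $\innerproduct{A}{\e}$ to be a type $\mathrm{II}_1$ factor) so that $\bar\mu$, being a multiple of the unique tracial state, is finite, and then Corollary \ref{cor_fin_auto_rds} applies. Your extra normalization check $\bar\mu(\e)=1$ versus $\mathrm{Tr}(\e)=[A:F]^{-1}$, giving $\bar\mu(1)=[A:F]<\infty$, is a correct and welcome filling-in of the details.
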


Using Corollary \ref{cor_fin_auto_rds}, we can also prove the following:

\begin{proposition}\label{prop_finite_ext}
 If $\mathbf{A }$ is a finite extension of $\mathbf{F},$ then $\mathbf{A }$ has discrete spectrum 
relative to 
$\mathbf{F}.$
\end{proposition}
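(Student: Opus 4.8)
The plan is to deduce the statement directly from Corollary \ref{cor_fin_auto_rds}: that corollary already shows that relative discrete spectrum holds whenever the trace $\bar\mu$ on the basic construction is finite. So the whole proof reduces to verifying that $\bar\mu(x)<\infty$ for every $x\in\innerproduct{A}{\e}^+$ when $\mathbf{A}$ is a finite extension of $\mathbf{F}$.

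To compute $\bar\mu$ I would apply Proposition \ref{prop_basic_constr} with $C=M_n$ and $\sigma=\mathrm{tr}$. Since $M_n$ is finite dimensional, the algebraic tensor product $A=B\odot M_n$ of Definition \ref{def_finite_ext} coincides with the von Neumann algebra tensor product $B\bar\otimes M_n$, so the hypotheses of Proposition \ref{prop_basic_constr} are satisfied, with the GNS Hilbert space $H_\sigma$ of $(M_n,\mathrm{tr})$ being $M_n$ equipped with the inner product $\innerproduct{x}{y}=\mathrm{tr}(x^*y)$. Proposition \ref{prop_basic_constr} then gives $\innerproduct{A}{\e}=B\bar\otimes\mathcal{B}(H_\sigma)$ and $\bar\mu=\mu\bar\otimes\mathrm{Tr}$, where $\mathrm{Tr}$ is the canonical trace on $\mathcal{B}(H_\sigma)$.

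The decisive point is that $H_\sigma$ is finite dimensional, of dimension $n^2$. Hence $\mathcal{B}(H_\sigma)\cong M_{n^2}$ and $\mathrm{Tr}$ is a finite trace, with $\mathrm{Tr}(1_{H_\sigma})=n^2$; since $\mu$ is a state, $\bar\mu(1)=n^2<\infty$, and then $\bar\mu(x)\le\norm{x}\,\bar\mu(1)<\infty$ for every positive $x$. Thus $\bar\mu$ is finite and Corollary \ref{cor_fin_auto_rds} applies, yielding relative discrete spectrum. I expect no genuine obstacle here: the automorphism $\alpha$ never enters the computation of $\innerproduct{A}{\e}$ or of $\bar\mu$ (these depend only on $A$, $\mu$ and $F$) and is already accounted for by the generality of Corollary \ref{cor_fin_auto_rds}. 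The only step requiring a word of justification is that a finite extension really does fit the tensor-product framework of Proposition \ref{prop_basic_constr}, i.e.\ that $B\odot M_n=B\bar\otimes M_n$, and this is immediate from the finite dimensionality of $M_n$.
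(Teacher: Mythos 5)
Your proposal is correct and follows essentially the same route as the paper: the paper likewise identifies the GNS space of $(M_n,\mathrm{tr})$ (written there as $\mathbb{C}^n\odot\mathbb{C}^n$ with cyclic vector $\Lambda=\frac{1}{\sqrt{n}}\sum_j e_j\otimes e_j$, which is your $M_n$ with the trace inner product), applies Proposition \ref{prop_basic_constr} to get $\innerproduct{A}{\e}=B\odot M_n\odot M_n$ with $\bar\mu=\nu\odot\mathrm{Tr}_n\odot\mathrm{Tr}_n$ finite, and concludes by Corollary \ref{cor_fin_auto_rds}. Your observations that the dynamics $\alpha$ plays no role and that $B\odot M_n=B\bar\otimes M_n$ by finite dimensionality are both sound.
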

\begin{proof}
Without loss of generality, assume that $(B,\nu)$ in Definition \ref{def_finite_ext} is in its GNS 
representation $B\rightarrow 
\mathcal{B}(H_\nu)$ 
with cyclic vector $\Omega_\nu.$
%\textbf{PhD3I pp.221--222}
 One can easily verify that the GNS triple for $M_n$ is $(\mathbb{C}^n\odot 
\mathbb{C}^n, \pi_n, \Lambda),$ where
 $\pi_n:M_n\rightarrow M_n\odot M_n:c\mapsto c\otimes 1,$ 
and
 $\Lambda=\frac{1}{\sqrt{n}}\sum_{j=1}^n e_j\otimes e_j$ with $\{e_j\}$ an orthonormal basis for 
$\mathbb{C}^n.$
Thus the GNS triple for $A =B\odot M_{n}$ is given by $(H_\nu\odot 
\mathbb{C}^n\odot\mathbb{C}^n,\pi,\Omega),$
where $\Omega=\Omega_\nu\otimes \Lambda$ and $\pi:B \odot M_n\rightarrow 
B\odot 
M_n\odot 
M_n:a\mapsto a\otimes 1$.

From Proposition \ref{prop_basic_constr},
 \[\innerproduct{A }{\e}=B \odot M_n\odot M_n\]
and 
\[\bar\mu=\nu\odot \mathrm{Tr},\]
where $\mathrm{Tr}:=\mathrm{Tr}_n\odot\mathrm{Tr}_n$, with 
$\mathrm{Tr}_n$ the usual trace (sum of diagonal entries) on $M_n$.

As $\bar\mu$ is finite, $\mathbf{A }$ has discrete spectrum relative to $\mathbf{F},$ by Corollary 
\ref{cor_fin_auto_rds}.
\end{proof}

\begin{example}\label{ex_finite_ext}
 We give a concrete realization of a finite extension for which the dynamics is not compact nor a 
tensor product of the dynamics on the underlying algebras. For simplicity, we focus on the case 
$n=2$ in Definition \ref{def_finite_ext}.
 
We let $\mathbf{B_1}=(B_1, \nu_1, \beta_1)$  and $ \mathbf{B_2}=(B_2,\nu_2, \beta_2)$ be 
systems.

Consider $B=B_1\oplus B_2$ which we view as the set of all matrices of the form 
\[\begin{bmatrix} b_1 &0 \\ 0& 
b_2\\ \end{bmatrix}\]
for $b_1\in B_1$ and $b_2\in B_2.$
%\textbf{p.216 PhD3I}
Let $s\in (0,1)\subset \mathbb{R}$ and put 
\[\nu=s(\nu_1\oplus 0)+(1-s)(0\oplus \nu_2).\] 
Then $\nu$ is a faithful normal state 
on $B.$ 
So $\mathbf{B}=(B,\nu,\beta),$ with $\beta=\beta_1\oplus \beta_2,$ is a system.
 
  Set 
  \[A =B \odot M_2 \qquad\text{and}\qquad \mu=\nu\odot \mathrm{tr}.\] 
  We now describe dynamics on $(A ,\mu)$.
 Let  
\[W=\begin{bmatrix}
     w_1 &w_2\\
     w_3 &w_4\\
    \end{bmatrix}\in A ,
\]
be unitary, where $w_i\in B,$ and define $\alpha(a):=WaW^\ast$ for all $a\in B\odot M_2$.
Then $\mathbf{A}=(A,\mu,\alpha)$ is a system.
  
 From direct calculations, the requirements that $W$ satisfy 
 $\alpha(b\otimes 1)=W\begin{bmatrix} b & 0\\ 0& b\end{bmatrix}W^{\ast}\in B\otimes 1$ for every 
$b\in B,$ and 
that 
$\alpha(b\otimes 1)=\beta(b)\otimes 1$, yield
\begin{equation}\label{eq_beta_fe_eg}
 \beta(b)=w_1bw^{\ast}_1+w_2bw^{\ast}_2=w_3bw^{\ast}_3+w_4bw^{\ast}_4
 \end{equation}
and  
\[w_1bw^*_3+w_2bw^*_4=w_3bw^*_1+w_4bw^*_2=0\]
for all $b\in B$. The direct sum structure of $B$ will allow us to satisfy the latter requirement easily, 
while still giving nontrivial dynamics. This is done by setting
 \[w_1=v_1\oplus 0 \qquad\text{and }\qquad w_4=v_4\oplus 0\] 
 for $v_1,v_4\in B_1$,
 and 
 \[w_2=0\oplus v_2 \qquad\text{and }\qquad w_3=0\oplus v_3 \]
for $v_2,v_3\in B_2$. Then \eqref{eq_beta_fe_eg} reads 
 \[v_1b_1v^{\ast}_1\oplus v_2b_2v^{\ast}_2=v_4b_1v_4^{\ast}\oplus v_3b_2v^{\ast}_3\]
 for every $b=b_1\oplus b_2\in B$.
 The $v_i$ are necessarily unitary, since $W$ is.
 It follows that \eqref{eq_beta_fe_eg} is satisfied exactly when $v_4^{\ast}v_1\in B_1'$ and 
$v^{\ast}_3v_2\in B'_2.$

 We now show that $\alpha$ is not a product of the $*$-automorphism $\beta$ and a $*$-automorphism 
on 
$M_2$.
By direct calculation, for every $m=\begin{bmatrix} m_1 & m_2 \\ m_3 & m_4\\ \end{bmatrix} \in 
M_2$,
\[
 \alpha(1_{B}\otimes m)=\begin{bmatrix}
                                m_1 1_{B_1}              &0 & m_2 v_1v^{\ast}_41_{B_1} & 0\\
                                0                        &m_41_{B_2} & 0 & m_3 
								v_2v^{\ast}_31_{B_2}\\
                                m_3v_4v_1^{\ast}1_{B_1} &0 & m_41_{B_1}&0 \\
                                0                                 & m_2v_3v^{\ast}_21_{B_2} &0 & 
								m_11_{B_2}.\\
                                \end{bmatrix}
\]

 So, $\alpha(1_{B}\otimes m)$ is not of the form 
 \[1_{B}\otimes t=\begin{bmatrix}
                                t_1 1_{B}& t_2 1_{B}\\
                                 t_3 1_{B}& t_4 1_{B}\\
                                \end{bmatrix}.
\]
 Thus, $\alpha$ 
cannot be 
a tensor product of dynamics on $B$ and $M_2$, respectively, unless $B_1=0$ and
$v_2v^*_3=v_3v^*_2=1_{B_1}$, or $B_2=0$ and $v_1v^*_4=v_4v^*_1=1_{B_1}$.
 
 Now consider a specific case.  Let $B_1$ be the group von Neumann algebra generated from a
 free group $G$ on two symbols $c$ and $d$. Let 
$\nu_1$ be the trace on $B_1$ (Example \ref{ex_skew_product}). The map 
$\beta_1:B_1\rightarrow B_1:a\mapsto l(d)a l(d)^{\ast}$ is a $*$-automorphism of $B_1.$ 
Furthermore, since 
$\nu_1$ is a trace, $\nu_1(\beta_1(b_1))=\nu_1(b_1).$ Note that in the cyclic representation 
$(\ell^2(G),\mathrm{id}, \delta_1)$, with $1\in G$ the identity, the unitary representation of $\beta_1$
is given by 
\[U_{\beta_1}\delta_g=U_{\beta_1}l(g)\delta_1=\delta_{dgd^{-1}}\]
for all $g\in G$ (i.e. $U_{\beta_1}=l(d)r(d)$ where $r$ is the right regular representation of
$G$).
Assume that $B_2\neq 0$.
 
 Let $v_1=v_4:=l(d)$. Then we show that $\mathbf{B}$ 
is not compact.
 If we consider the orbit $U_{\beta_1}^\mathbb{Z}\delta_c$ of $\delta_c$ under $U_{\beta_1}$
 \[U_{\beta_1}^\mathbb{Z}\delta_c=\{\ldots, \delta_{d^{-2}cd^{2}}, \delta_{d^{-1}cd^{1}},  
\delta_{c}, 
\delta_{dcd^{-1}}, \delta_{d^{2}cd^{-2}}, \delta_{d^{3}cd^{-3}},\ldots\},\] 
then we have $d^mcd^{-m}\neq d^ncd^{-n},$ and
\begin{align}
 \norm{\delta_{d^mcd^{-m}}-\delta_{d^ncd^{-n}}}=\sqrt{2}\nonumber
\end{align}
for all $m,n\in \mathbb{Z}$ with $m\neq n.$ 
Hence, $U_{\beta_1}^\mathbb{Z}\delta_c$ cannot be totally bounded, so that, as we are in a metric 
space, the closure of 
$U_{\beta_1}^\mathbb{Z}\delta_c$ cannot be compact. 
It follows that $\mathbf{B }$ is not a compact system, i.e. $\mathbf{B }$ does not have discrete spectrum.

Thus we have constructed a finite extension $\mathbf{A }$ of a non-compact system $\mathbf{B }$, 
such that $\alpha$ is not the product of the dynamics on $B $ with the 
dynamics on $M_2$.
\end{example}

It ought to be possible to take an infinite direct sum of copies of $A$ above, to obtain an isometric 
extension of $\mathbf{B}$ which is not a finite extension, by
weighing the traces of the copies of $A$ by weights adding up to one, and allowing for possibly different finite extension dynamics on the copies of $A$. However, the foregoing finite extension 
already makes our main point, namely, it gives a purely noncommutative example of relative discrete spectrum.
\section{Further Questions}\label{sec_questions}
\begin{comment}
We collect a number of further problems related to isometric extensions.

\begin{enumerate}
\item It appears to be possible to show that an infinite sum of finite extensions is an
isometric extension. 

 \item %\underline{Intermediate Subsystems}
   Given a system and a subsystem, one can consider an intermediate system.  It can be shown that the intermediate system is an isometric extension of the subsystem. However, we do not know if the system has discrete spectrum 
   relative to the intermediate system.
         
 \item %\underline{A Decision Problem}
   We would like to know if there are methods to determine if a projection is of finite lifted trace.
   
 \item 
   Is it possible to have a sensible notion of
   generalized eigenvalue in the context of noncommutative relative discrete spectrum?
 \end{enumerate}

 \textbf{A more informal presentation of the material follows}
\end{comment} 
 We end the paper with an informal discussion of some problems related to isometric extensions.
 
 We can consider an intermediate system between a system and an isometric extension of it, and ask 
if the intermediate system leads to two new isometric extensions. (In the classical 
theory such a result holds; see \cite[Lemma 9.12]{G}). In the noncommutative case it can be shown that 
the intermediate system is an isometric
extension of the system, but the question is if  
the original isometric extension is also an isometric extension of the intermediate system. One obstacle is 
relating 
the 
modules of the different pairings with one another. 

A 
technical problem when using our definition of isometric extension is deciding if a given 
projection in the basic construction has finite 
trace. 

Is it possible to make sense of our definition in a more classical sense? For instance, we would like to know 
if there is a sensible notion of generalized eigenvalue. Generalized eigenvectors appear to be ``virtual 
objects'' in our definition and it would be interesting to see if we could find an equivalent 
characterization directly in terms of generalized eigenvectors. 
\section*{Acknowledgements}

This work was partially supported by the National Research Foundation of South
Africa.


\begin{thebibliography}{9}                                                                          
 \begin{comment}%Used in intro in DK
 %Not referenced       
\bibitem {AD}B. Abadie and K. Dykema, \emph{Unique ergodicity of free shifts
and some other automorphisms of C*-algebras}, J. Operator Theory 61 (2009), 279--294.
\end{comment}
\bibitem {AET}T. Austin, T. Eisner and T. Tao, \emph{Nonconventional ergodic
averages and multiple recurrence for von Neumann dynamical systems}, Pacific
J. Math. 250 (2011), 1--60.
\begin{comment}%Used in intro in DK
%Not referenced 
\bibitem {BCM16}J. P. Bannon, J. Cameron and K. Mukherjee, \emph{The modular
symmetry of Markov maps}, J. Math. Anal. Appl. 439 (2016), 701--708.
\end{comment}
%Volume no. is 2018. Year published is 2017
\bibitem {BCM}J. P. Bannon, J. Cameron and K. Mukherjee, \emph{On
noncommutative joinings},  Int. Math. Res. Not. 2018, 4734--4779.
\begin{comment}
%Contains unitary representation of *-automorphism%Not referenced
\bibitem {BR1}O. Bratteli and D. W. Robinson, \emph{Operator algebras and
quantum statistical mechanics 1}, second edition, Springer-Verlag, New York, 1987.
\end{comment}
\bibitem {C}E. Christensen, \emph{Subalgebras of a finite algebra}, Math. Ann.
243 (1979), 17--29.

\bibitem {D3}R. Duvenhage, \emph{Relatively independent joinings and
subsystems of W*-dynamical systems}, Studia Math. 209 (2012), 21--41.

\bibitem {DK}R. Duvenhage and M. King, \emph{Relative weak mixing of {W}*-dynamical systems via 
joinings},
Studia Math. 247 (2019), 63--84
\begin{comment}%Referenced in DK's introduction
%Not referenced
\bibitem {DM}R. Duvenhage and F. Mukhamedov, \emph{Relative ergodic properties
of C*-dynamical systems}, Infin. Dimens. Anal. Quantum Probab. Relat. Top. 17
(2014), 1450005.
%Not referenced
\bibitem {F1}F. Fidaleo, \emph{An ergodic theorem for quantum diagonal
measures}, Infin. Dimens. Anal. Quantum Probab. Relat. Top. 12 (2009), 307--320.
%Not referenced
\bibitem {F2}F. Fidaleo, \emph{On strong ergodic properties of quantum
dynamical systems}, Infin. Dimens. Anal. Quantum Probab. Relat. Top. 12
(2009), 551--564.
%Not referenced
\bibitem {F67}H. Furstenberg, \emph{Disjointness in ergodic theory, minimal
sets, and a problem in Diophantine approximation}, Math. Systems Theory 1
(1967), 1--49.
\end{comment}
\bibitem {F77}H. Furstenberg, \emph{Ergodic behavior of diagonal measures and
a theorem of Szemer\'{e}di on arithmetic progressions}, J. Analyse Math. 31
(1977), 204--256.

\begin{comment}
%Not referenced
\bibitem {F81}H. Furstenberg, \emph{Recurrence in ergodic theory and
combinatorial number theory}, M. B. Porter Lectures. Princeton University
Press, Princeton, N.J., 1981
%Referenced in DK intro
\bibitem {FK}H. Furstenberg and Y. Katznelson, \emph{An ergodic Szemer\'{e}di
theorem for commuting transformations}, J. Analyse Math. 34 (1978), 275--291
\end{comment}
\bibitem {G}E. Glasner, \emph{Ergodic theory via joinings}, Mathematical
Surveys and Monographs 101, American Mathematical Society, Providence, RI, 2003.
\begin{comment}
%Remove; works with states
\bibitem {HT}R. H. Herman, M. Takesaki, \emph{States and automorphism groups
of operator algebras}, Comm. Math. Phys. 19 (1970), 142--160.
\end{comment}
\bibitem {J}V. F. R. Jones, \emph{Index for subfactors}, Invent. Math. 72
(1983), 1--25.

\bibitem {JonSun1997} V. Jones and V.S. Sunder \emph{Introduction to Subfactors}, London 
Mathematical Society 
Lecture Note Series, 234,  Cambridge University Press, 1997

\bibitem {KadRing2}R. V. Kadison and J. R. Ringrose \emph{\ Fundamentals of
the Theory of Operator Algebras Volume II: Advanced Theory}, Vol. 2. American
Mathematical Soc., 2015.
\begin{comment}
\bibitem {KLP}D. Kerr, H. Li and M. Pichot, \emph{Turbulence, representations
and trace-preserving actions}, Proceedings of the London Mathematical Society
(3) 100 (2010), 459--484.
\end{comment}
\bibitem{nielsen1980} O.A. Nielsen, \emph{Direct Integral Theory}, Lecture notes in pure and 
applied 
mathematics, 61, Marcel Decker Inc., 1980.

\bibitem{petersonErgodicNotes} J. Peterson, \emph{Lecture notes on ergodic theory},
\url{https://math.vanderbilt.edu/peters10/teaching/Spring2011/ErgodicTheoryNotes.pdf}

\bibitem {P}S. Popa, \emph{Cocycle and orbit equivalence superrigidity for
malleable actions of }$w$\emph{-rigid groups}, Invent. Math. 170 (2007), 243--295.
\begin{comment}
%Compact Exts coincide with Isometric ones
\bibitem{robertson2016characteristic} D. Robertson, \emph{Characteristic factors for commuting 
actions of 
amenable groups}, J. Analyse Math, 129 (2016), 165--196
%Used in intro DK
\bibitem {Rud}D. J. Rudolph, \emph{An example of a measure preserving map with
minimal self-joinings, and applications}, J. Analyse Math. 35 (1979), 97--122.
%Used in intro DK
\bibitem {ST}J.-L. Sauvageot and J.-P. Thouvenot, \emph{Une nouvelle
d\'{e}finition de l'entropie dynamique des syst\`{e}mes non commutatifs},
Comm. Math. Phys. 145 (1992), 411--423.
\end{comment}
\bibitem {SS}A . M. Sinclair and R. R. Smith, \emph{Finite von {N}eumann
Algebras and Masas}, London Mathematical Society Lecture Note Series, 351,
Cambridge University Press, 2008.

\bibitem {Sk}C. F. Skau, \emph{Finite subalgebras of a von Neumann algebra},
J. Functional Analysis 25 (1977), 211--235.

\bibitem {S}\c{S}. Str\u{a}til\u{a}, \emph{Modular theory in operator
algebras}, Editura Academiei Republicii Socialiste Rom\^{a}nia, Bucharest,
Abacus Press, Tunbridge Wells, 1981.

\bibitem {stratila1979lectures} S. Stratila and  L. Zsido, \emph{Lectures on {N}eumann algebra}, 
Abacus, 
Tunbridge Well, 1979

\bibitem {Tak1}M. Takesaki, \emph{Theory of operator algebras. I}{\normalsize ,
}Encyclopaedia of Mathematical Sciences, 125. Operator Algebras and
Non-commutative Geometry, 6. Springer-Verlag, Berlin, 2003.
\begin{comment}
%Used in DK due to automorphism groups of states
\bibitem {T}M. Takesaki, \emph{Theory of operator algebras. II}{\normalsize ,
}Encyclopaedia of Mathematical Sciences, 125. Operator Algebras and
Non-commutative Geometry, 6. Springer-Verlag, Berlin, 2003.
\end{comment}
\bibitem {Tao}T. Tao, \emph{Poincar\'{e}'s legacies, Part I: pages from year
two of a mathematical blog}, American Mathematical Soc., 2009.

\bibitem {Z1}R. J. Zimmer, \emph{Extensions of ergodic group actions}, Illinois J. Math. 20, (1976), 373--409.

\bibitem {Z2}R. J. Zimmer, \emph{Ergodic actions with generalized discrete
spectrum}, Illinois J. Math. 20 (1976), 555--588.
\begin{comment}
%Compact extensions (Tao sense) are isometric (Furstenberg sense)
\bibitem{Z-K}P. Zorin-Kranich, \emph{Compact extensions are isometric},
\url{https://www.math.uni-bonn.de/people/pzorin/notes/compact-isometric.pdf}
\end{comment}
\end{thebibliography}
\end{document}